\documentclass[onefignum,onetabnum]{siamart190516}



\usepackage{lipsum}
\usepackage{amsfonts}
\usepackage{amsmath}
\usepackage{bbm}
\usepackage{mathrsfs}
\usepackage{graphicx}
\usepackage{epstopdf}
\usepackage{algorithmic}
\usepackage{physics}
\usepackage{mathrsfs}
\usepackage{dsfont}
\usepackage{enumerate}
\usepackage{csquotes}
\usepackage{url}
\usepackage{color}
\usepackage{yhmath}

\ifpdf
  \DeclareGraphicsExtensions{.eps,.pdf,.png,.jpg}
\else
  \DeclareGraphicsExtensions{.eps}
\fi


\newsiamremark{hypothesis}{Hypothesis}
\crefname{hypothesis}{Hypothesis}{Hypotheses}
\newsiamthm{claim}{Claim}

\newtheorem{assumption}{Assumption}
\newtheorem{remark}{Remark}

\headers{LQ Stackelberg Games for Jump Diffusions}{Jun Moon}

\title{Linear-Quadratic Stochastic Stackelberg Differential Games for Jump-Diffusion Systems\thanks{Submitted to the editors DATE.
\funding{This research was supported in part by the National Research Foundation of Korea (NRF) Grant funded by the Ministry of Science and ICT, South Korea (NRF-2017R1E1A1A03070936, NRF-2017R1A5A1015311).
}}}

\author{Jun Moon\thanks{Department of Electrical Engineering, Hanyang University, Seoul 04763, South Korea (\email{junmoon@hanyang.ac.kr}, \url{https://junmoony.github.io/}).}
}

\usepackage{amsopn}


\ifpdf
\hypersetup{
  pdftitle={Linear-Quadratic Stochastic Stackelberg Differential Games for Jump-Diffusion Systems},
  pdfauthor={Jun Moon}
}
\fi


\externaldocument{ex_supplement}


\begin{document}

\maketitle

\begin{abstract}
This paper considers linear-quadratic (LQ) stochastic leader-follower Stackelberg differential games for jump-diffusion systems with random coefficients. We first solve the LQ problem of the follower using the stochastic maximum principle and obtain the state-feedback representation of the open-loop optimal solution in terms of the integro-stochastic Riccati differential equation (ISRDE), where the state-feedback type control is shown to be optimal via the completion of squares method. Next, we establish the stochastic maximum principle for the indefinite LQ stochastic optimal control problem of the leader using the variational method. However, to obtain the state-feedback representation of the open-loop solution for the leader, there is a technical challenge due to the jump process. To overcome this limitation, we consider two different cases, in which the state-feedback type optimal control for the leader in terms of the ISRDE can be characterized by generalizing the Four-Step Scheme. Finally, in these two cases, we show that the state-feedback representation of the open-loop optimal solutions for the leader and the follower constitutes the Stackelberg equilibrium. Note that the (indefinite) LQ control problem of the leader is new and nontrivial due to the coupled FBSDE constraint induced by the rational behavior of the follower.
\end{abstract}

\begin{keywords}
  leader-follower Stackelberg game, LQ control for jump diffusions, forward-backward stochastic differential equation with jump diffusions, stochastic Riccati differential equation.
\end{keywords}

\begin{AMS}
 91A65, 93E20, 49K45, 49N10
\end{AMS}

\section{Introduction}
We first state the notation used in this paper. The precise problem formulation and the detailed literature review are then followed.
\subsection{Notation}
Let $\mathbb{R}^n$ be the $n$-dimensional Euclidean space. For $x,y \in \mathbb{R}^n$, $x^\top$ denotes the transpose of $x$, $\langle x,y \rangle$ is the inner product, and $|x| := \langle x, x \rangle^{1/2}$. Let $\mathbb{S}^n$ be the set of $n \times n$ symmetric matrices. Let $|x|^2_{S} :=x^\top S x$ for $x \in \mathbb{R}^n$ and $S \in \mathbb{S}^n$.


Let $(\Omega, \mathcal{F}, \mathbb{P})$ be a complete probability space with the natural filtration $\mathbb{F} :=\{\mathcal{F}_s,~ 0 \leq s \leq t\}$ generated by the following two mutually independent stochastic processes and augmented by all the $\mathbb{P}$-null sets in $\mathcal{F}$:
\begin{itemize}
\item a one dimensional standard Brownian motion $B$ defined on $[0,T]$;
\item an $E$-marked right continuous Poisson random measure (process) $N$ defined on $E  \times [0,T]$, where $E := \bar{E} \setminus \{0\}$ with $\bar{E} \subset \mathbb{R}$ is a Borel subset of $\mathbb{R}$ equipped with its Borel $\sigma$-field $\mathcal{B}(E)$. The intensity measure of $N$ is denoted by $\lambda^\prime(\dd e, \dd t) := \lambda(\dd e) \dd t$, satisfying $\lambda(E) < \infty$, where $\{\tilde{N}(A,(0,t]) := (N-\lambda^\prime)(A,(0,t])\}_{t \in (0,T]}$ is an associated compensated $\mathcal{F}_t$-martingale random (Poisson) measure of $N$ for any $A \in \mathcal{B}(E)$. Here, $\lambda$ is a $\sigma$-finite L\'evy measure on $(E,\mathcal{B}(E))$, which satisfies $\int_{E} (1 \wedge  |e|^2) \lambda (\dd e) < \infty$.\footnote{If the Poisson process $N$ has jumps of unite size ($E = \{1\}$), then $\{\tilde{N}((0,t]) := (N-\lambda^\prime)((0,t])\}_{t \in (0,T]}$ is the compensated Poisson process, where $\lambda^\prime (\dd t) := \lambda \dd t$ and $\lambda > 0$ is the intensity of $N$ \cite{Applebaum_book, Oksendal_book_jump, Privault_book}.}
\end{itemize}

We introduce the following spaces \cite{Applebaum_book}: for $t \in [0,T]$,
\begin{itemize}
\item $\mathcal{C}_{\mathbb{F}}^2(t,T;\mathbb{R}^n)$: the space of $\mathcal{F}_t$-adapted  $\mathbb{R}^n$-valued stochastic processes, which is c\`adl\`ag and satisfies $\|x\|_{\mathcal{C}_{\mathcal{F}}^2} := \mathbb{E}[\sup_{s \in [t,T]} |x(s)|^2]^{\frac{1}{2}} < \infty$;
\item $\mathcal{L}_{\mathbb{F}}^2(t,T;\mathbb{R}^n)$: the space of $\mathcal{F}_t$-adapted  $\mathbb{R}^n$-valued stochastic processes, satisfying $\|x\|_{\mathcal{L}_{\mathcal{F}}^2} := \mathbb{E}[\int_t^T |x(s)|^2 \dd s ]^{\frac{1}{2}} < \infty$;
\item $\mathcal{L}_{\mathbb{F},p}^2(t,T;\mathbb{R}^n)$: the space of $\mathcal{F}_t$-predictable $\mathbb{R}^n$-valued stochastic processes, satisfying $\|x\|_{\mathcal{L}_{\mathcal{F},p}^2} := \mathbb{E}[\int_t^T |x(s)|^2 \dd s ]^{\frac{1}{2}} < \infty$;
\item $G^2(E,\mathcal{B}(E),\lambda;\mathbb{R}^n)$:  the space of square integrable functions such that for $k \in G^2(E,\mathcal{B}(E),\lambda;\mathbb{R}^n)$, $k:E \rightarrow \mathbb{R}^n$ satisfies $\|k\|_{G^2} := (\int_{E} |k(e)|^2 \lambda(\dd e))^{\frac{1}{2}} < \infty$, where $\lambda$ is a $\sigma$-finite L\'evy measure on $(E,\mathcal{B}(E))$;
\item $\mathcal{G}^2_{\mathbb{F},p}(t,T,\lambda;\mathbb{R}^n)$: the space of stochastic processes such that $k:\Omega \times [t,T] \times E \rightarrow \mathbb{R}^n$, for $k \in \mathcal{G}^2_{\mathbb{F},p}(t,T,\lambda;\mathbb{R}^n)$, is an $\mathcal{P} \times \mathcal{B}(E)$-measurable $\mathbb{R}^n$-valued predictable process satisfying $\|k\|_{\mathcal{G}^2_{\mathbb{F},p}} := \mathbb{E}[\int_t^T \|k(s)\|_{G^2} \dd s]^{\frac{1}{2}} < \infty$, where $\mathcal{P}$ denotes the $\sigma$-algebra of $\mathcal{F}_t$-predictable subsets of $\Omega \times [t,T]$. 
\end{itemize}

\subsection{Problem Statement}\label{Section_1_2}
We consider the following controlled stochastic differential equation (SDE) on $[t,T]$\footnote{The assumption of the one-dimensional $B$ and $\widetilde{N}$ in (\ref{eq_1}) is only for notational convenience, and we can easily extend the results of this paper to the multi-dimensional case.}:
\begin{align}
\label{eq_1}
\begin{cases}
\dd x(s)  = \Bigl [ A(s) x(s-) + B_1(s) u_1(s) +  B_2(s) u_2(s) \Bigr ] \dd s \\
~~~~~~~~~ + \Bigl [ C(s) x(s-) + D_1(s) u_1(s) + D_2(s) u_2(s) \Bigr ] \dd B(s) \\
~~~~~~~~~ + \int_{E} \Bigl [ F(s,e) x(s-) + G_1(s,e) u_1(s) + G_2(s,e) u_2(s) \Bigr ] \widetilde{N}(\dd e, \dd s) \\
x(t) = a,
\end{cases}
\end{align}
where $x \in \mathbb{R}^n$ is the state process, $u_1 \in \mathbb{R}^{m_1}$ is the control of the leader, and $u_2 \in \mathbb{R}^{m_2}$ is the control of the follower. Let $\mathcal{U}_1 := \mathcal{L}_{\mathbb{F},p}^2(t,T;\mathbb{R}^{m_1})$ and $\mathcal{U}_2 := \mathcal{L}_{\mathbb{F},p}^2(t,T;\mathbb{R}^{m_2})$ be spaces of admissible controls for the leader and the follower, respectively. 
\begin{assumption}\label{Assumption_1}
$A,C:\Omega \times [0,T] \rightarrow \mathbb{R}^{n \times n}$, $B_i,D_i:\Omega \times [0,T] \rightarrow \mathbb{R}^{n \times m_i}$, $i=1,2$, $F:\Omega \times [0,T] \rightarrow G^2(E,\mathcal{B}(E),\lambda;\mathbb{R}^{n \times n})$, $G_i:\Omega \times [0,T] \rightarrow G^2(E,\mathcal{B}(E),\lambda;\mathbb{R}^{n \times m_i})$, $i=1,2$, are $\mathcal{F}_t$-predictable stochastic processes (random coefficients of (\ref{eq_1})), which are continuous in $t \in [0,T]$ and uniformly bounded in a.e $(\omega, t) \in \Omega \times [0,T]$.
\end{assumption}
Under Assumption \ref{Assumption_1}, for any $(u_1,u_2) \in \mathcal{U}_1 \times \mathcal{U}_2$, (\ref{eq_1}) admits a unique c\`adl\`ag solution in $\mathcal{C}_{\mathbb{F}}^2(t,T;\mathbb{R}^n)$ \cite[Theorem 6.2.3]{Applebaum_book} (see also \cite[Theorem 1.19]{Oksendal_book_jump} and \cite{Moon_MCRF_2020}).

\begin{remark}\label{Remark_1}
When the Poisson process $N$ has jumps of unit size, i.e., $E = \{1\}$, (\ref{eq_1}) becomes \cite{Applebaum_book, Oksendal_book_jump, Privault_book}
\begin{align*}
\begin{cases}
\dd x(s)  = \bigl [ A(s) x(s-) + B_1(s) u_1(s) +  B_2(s) u_2(s) \bigr ] \dd s \\
~~~~~~~~~ + \bigl [ C(s) x(s-) + D_1(s) u_1(s) + D_2(s) u_2(s) \bigr ] \dd B(s) \\
~~~~~~~~~ + \bigl [ F(s) x(s-) + G_1(s) u_1(s) + G_2(s) u_2(s) \bigr ] \dd \widetilde{N}(s) \\
x(t) = a.
\end{cases}
\end{align*}
\end{remark}

The objective functional to be minimized by the leader is given by
\begin{align}	
\label{eq_2}
J_1(a; u_1,u_2) & = \mathbb{E} \Bigl [ \int_t^T \bigl [ |x(s)|^2_{Q_1(s)}  + |u_1(s)|^2_{R_1(s)} \bigr ] \dd s + |x(T)|^2_{M_1} \Bigr ],
\end{align}
and the objective functional of the follower is as follows
\begin{align}
\label{eq_3}		
J_2(a; u_1,u_2) & = \mathbb{E} \Bigl [ \int_t^T \bigl [ |x(s)|^2_{Q_2(s)} + |u_2(s)|^2_{R_2(s)} \bigr ] \dd s + |x(T)|^2_{M_2} \Bigr ].
\end{align}

\begin{assumption}\label{Assumption_2}
$Q_i:\Omega \times [0,T] \rightarrow \mathbb{S}^{n}$ and $R_i:\Omega \times [0,T] \rightarrow \mathbb{S}^{m_i}$ $i=1,2$, are $\mathcal{F}_t$-predictable stochastic processes (random coefficients of (\ref{eq_2}) and (\ref{eq_3})), which are continuous in $t \in [0,T]$ and uniformly bounded in a.e $(\omega, t) \in \Omega \times [0,T]$. Also, $M_i:\Omega \rightarrow \mathbb{S}^{n}$, $i=1,2$, are $\mathcal{F}_T$-measurable random matrices, which are uniformly bounded in a.e. $\omega \in \Omega$. 
\end{assumption}

\begin{remark}\label{Remark_1_1}
Note that in Assumption \ref{Assumption_2}, the cost parameters $Q_i$, $R_i$, and $M_i$, $i=1,2$ of (\ref{eq_2}) and (\ref{eq_3}) are not needed to be positive (semi)definite matrices.
\end{remark}

The interaction between the leader and the follower of the LQ Stackelberg game of this paper can be stated as follows. The leader chooses and announces her (or his) optimal solution to the follower by considering the rational reaction of the follower. The follower then determines his (or her) optimal solution by responding to the optimal solution of the leader. Then the above problem can be referred to as the \emph{linear-quadratic (LQ) stochastic Stackelberg differential game for jump-diffusion systems with random coefficients}.

Under this setting, the problem can be solved in a reverse way \cite{Jiongmin_Yong_1, Bensoussan_SIAM, Basar2}. Specifically, the main objective of the follower is to minimize (\ref{eq_3}) subject to (\ref{eq_1}) for any control of the leader $u_1 \in \mathcal{U}_1$, i.e.,
\begin{align}
\label{eq_4}
\textbf{(LQ-F)}~~J_2(a;u_1,\overline{u}_2[a,u_1]) = \inf_{u_2 \in \mathcal{U}_2} J_2(a;u_1,u_2),~ \forall u_1 \in \mathcal{U}_1.
\end{align}
We note that from (\ref{eq_4}), $\overline{u}_2$ is an optimal strategy dependent on $(a,u_1) \in \mathbb{R}^n \times \mathcal{U}_1$, i.e., $\overline{u}_2 : \mathbb{R}^n  \times \mathcal{U}_1 \rightarrow \mathcal{U}_2$. Then given the optimal solution of \textbf{(LQ-F)}, the problem of the leader can be stated as follows:
\begin{align}	
\label{eq_5}
\textbf{(LQ-L)}~~J_1(a;\overline{u}_1,\overline{u}_2[a,\overline{u}_1]) = \inf_{u_1 \in \mathcal{U}_1} J_1(a;u_1,\overline{u}_2[a,u_1]).
\end{align}
When the pair $(\overline{u}_1,\overline{u}_2[a,\overline{u}_1]) \in \mathcal{U}_1 \times \mathcal{U}_2$ in (\ref{eq_4}) and (\ref{eq_5}) exists,  we say that the pair $(\overline{u}_1,\overline{u}_2[a,\overline{u}_1])$ constitutes an (adapted) open-loop type \emph{Stackelberg equilibrium} for the leader and the follower in the Stackelberg game \cite{Basar2, Jiongmin_Yong_1, Bensoussan_SIAM, Moon_Yang_TAC_2020}.


The main results of this paper can be summarized as follows:

In Section \ref{Section_2}, we solve \textbf{(LQ-F)} in (\ref{eq_4}). In particular, using the stochastic maximum principle for jump-diffusion systems \cite{Tang_SICON_1994}, we obtain an open-loop type optimal solution for \textbf{(LQ-F)} in terms of the forward-backward SDE (FBSDE) with jump diffusions and random coefficients, which explicitly depends on $(a,u_1) \in \mathbb{R}^n \times \mathcal{U}_1$. {\color{black}Since the open-loop type optimal solution is not implementable in practical situations, we obtain its state-feedback representation in terms of the integro-stochastic Riccati differential equation (ISRDE) by extending the Four-Step Scheme of \cite{Jiongmin_Yong_1} to the case of general jump-diffusion models.} We then show that the corresponding state-feedback type control is the optimal solution for \textbf{(LQ-F)} and identify the explicit optimal cost via the generalized completion of squares method (see Theorem \ref{Theorem_1}).

We solve \textbf{(LQ-L)} in (\ref{eq_5}) in Section \ref{Section_3}. \textbf{(LQ-L)} is the (indefinite) LQ stochastic optimal control problem for FBSDEs with jump diffusions and random coefficients, where the FBSDE constraint, induced from \textbf{(LQ-F)}, characterizes the rational reaction behavior of the follower \cite{Basar2, Bensoussan_SIAM} (see \textbf{(LQ-L)} in (\ref{eq_3_1})). We first obtain the stochastic maximum principle for \textbf{(LQ-L)} using the variational approach and duality analysis, which has not been studied in the existing literature. Then by the stochastic maximum principle, the open-loop optimal solution for \textbf{(LQ-L)} is obtained in terms of the coupled FBSDEs with jump diffusions and random coefficients (see Lemma \ref{Lemma_1}). 

The state-feedback representation of the open-loop optimal solution of \textbf{(LQ-L)} in terms of the ISRDE is obtained by establishing the Four-Step Scheme for FBSDEs with jump diffusions and random coefficients (see Theorems \ref{Theorem_2} and \ref{Theorem_3}). Unfortunately, there is a technical limitation, which did not appear in \cite{Jiongmin_Yong_1}. The detailed discussion is given in Remark \ref{Remark_7}. Hence, we consider two different cases: 
\begin{enumerate}[(i)]
	\item the Poisson process $N$ has jumps of unit size (see Remark \ref{Remark_1});
	\item the jump part of (\ref{eq_1}) does not depend on the control of the follower ($G_2 = 0$).
\end{enumerate}
We note that the Four-Step Schemes of (i) and (ii) are more involved than the Four-Step Scheme without jumps in \cite{Jiongmin_Yong_1}. A related discussion can be found in Section \ref{Section_1_3}. Moreover, the ISRDEs of \textbf{(LQ-L)} in (\ref{eq_3_21}) and (\ref{eq_3_31}) are nonsymmetric and highly nonlinear, whereas the SRDE in \cite[Theorem 3.3, (3.38)]{Jiongmin_Yong_1} is symmetric (see Remark \ref{Remark_7}). When \textbf{(LQ-F)} and \textbf{(LQ-L)} of (i) and (ii) are solvable, the corresponding open-loop optimal solutions constitute the Stackelberg equilibrium, and they admit the state-feedback representation (see Corollaries \ref{Corollary_1} and \ref{Corollary_2}).

\subsection{Literature Review and Main Contributions of this Paper}\label{Section_1_3}

A leader-follower Stackelberg differential game (in a deterministic setting) was first studied by H. Von Stackelberg in \cite{Stackelberg_book}. Since then, (deterministic and stochastic) Stackelberg differential games and their applications have been studied extensively in the literature, see \cite{Basar2, 1101986, 1101999, Freiling, Jiongmin_Yong_1, Shi_Automatica_2016, Bensoussan_SIAM, Srikant_large, LI_SICON_2018, Moon_Automatica_2018, Xu_SC_2018, Lin_TAC_2019, Zheng_DGAA_2019, Moon_Yang_TAC_2020} and the references therein. 

Specifically, a complete solution to the LQ stochastic Stackelberg differential game for SDEs without jump diffusions (equivalent to $F = G_1 = G_2 = 0$ in (\ref{eq_1})) was obtained by J. Yong in \cite{Jiongmin_Yong_1}. In \cite{Jiongmin_Yong_1}, the open-loop type Stackelberg equilibrium and its state-feedback representation in terms of the SRDE were obtained via the maximum principle and the standard Four-Step Scheme. A general stochastic maximum principle of Stackelberg differential games for SDEs without jumps was established in \cite{Bensoussan_SIAM} for both (adapted) open-loop and closed-loop information structures. Stochastic Stackelberg differential games for backward SDEs (BSDEs) (with deterministic coefficients) were studied in \cite{Zheng_DGAA_2019}. The authors in \cite{LI_SICON_2018} considered Stackelberg games for FBSDEs without jumps, and \cite{Xu_SC_2018} studied the delay case of SDEs (without jumps) with deterministic coefficients. Mean-field type stochastic Stackelberg differential games for SDEs without jumps were considered in \cite{Lin_TAC_2019, Moon_Yang_TAC_2020}.

{\color{black}Note that the references mentioned above considered the case of SDEs in a Brownian setting without jumps. To the best of our knowledge, a class of (LQ or nonlinear) stochastic Stackelberg differential games for general jump-diffusion systems with random coefficients has not been studied in the existing literature, and our paper addresses the LQ problem (see Section \ref{Section_1_2}). This paper can be viewed as a nontrivial extension of \cite{Jiongmin_Yong_1} to the problem for general jump-diffusion systems. The main generalizations and technical challenges of this paper compared with \cite{Jiongmin_Yong_1} are as follows:
\begin{enumerate}[(a)]
	\item the inclusion of the jump-diffusion part with random coefficients in (\ref{eq_1});
	\item the explicit dependence of the controls of the leader and the follower on the jump diffusion part of the SDE in (\ref{eq_1});
	\item the indefiniteness of the cost parameters in (\ref{eq_2}) and (\ref{eq_3}) (see Remark \ref{Remark_1_1}).
\end{enumerate}

In \textbf{(LQ-F)} of Section \ref{Section_2}, due to (a)-(c), the Four-Step Scheme to find the explicit state-feedback type optimal solution of \textbf{(LQ-F)} should be more involved than \cite[Section 2]{Jiongmin_Yong_1}. Specifically, in \textbf{(LQ-F)}, we need to obtain the equivalent state-feedback expression of the solution to the adjoint equation (BSDE with jumps diffusions and random coefficients (\ref{eq_2_1})) as in (\ref{eq_2_3}) and (\ref{eq_2_5}). However, due to the complex quadratic variations with respect to the Brownian motion and the (compensated) Poisson process, a more sophisticated analysis than \cite{Jiongmin_Yong_1} is needed to find (\ref{eq_2_3}) and (\ref{eq_2_5}), which leads to the explicit characterization of the ISRDE and the associated state-feedback type optimal control of \textbf{(LQ-F)} in Theorem \ref{Theorem_1}. We should mention that \cite[Section 2]{Jiongmin_Yong_1} is a special case of \textbf{(LQ-F)}. Note that in Theorem \ref{Theorem_1}, we also verify optimality of the state-feedback type optimal solution of \textbf{(LQ-F)} and obtain the explicit optimal cost, for which we additionally need to generalize the completion of squares method of \cite[Theorem 2.3]{Jiongmin_Yong_1} for general jump-diffusion models.

In \textbf{(LQ-L)} of Section \ref{Section_3}, the leader is faced with the indefinite LQ stochastic optimal control problem for FBSDEs with jump diffusions and random coefficients, where the constraint, the rational behavior of the follower, is induced from \textbf{(LQ-F)}. Due to (a)-(c), the stochastic maximum principle for \textbf{(LQ-L)} in Lemma \ref{Lemma_1} should be different from \cite[Theorem 3.2]{Jiongmin_Yong_1}. In particular, our adjoint processes in (\ref{eq_3_2}) are the coupled FBSDEs with jump diffusions and random coefficients. Moreover, in the proof of Lemma \ref{Lemma_1}, the duality analysis in the variational approach includes the stochastic integrals of the continuous (Brownian) and pure jump (compensated Poisson) martingale processes, and their associated quadratic variations, by which the additional integrals with respect to the L\'evy measure are included in the first-order optimality condition in (\ref{eq_3_4}). Such an extended duality analysis is not presented in \cite{Jiongmin_Yong_1}, and for the case without jumps, Lemma \ref{Lemma_1} degenerates to \cite[Theorem 3.2]{Jiongmin_Yong_1}.

The Four-Step Scheme for FBSDEs with jump diffusions and random coefficients is established in \textbf{(LQ-L)} of Section \ref{Section_3} to obtain the explicit state-feedback representation of the open-loop optimal solution in \textbf{(LQ-L)}, which should also be more challenging than \cite[Section 3]{Jiongmin_Yong_1}. Specifically, after defining augmented (forward and backward) state processes in (\ref{eq_3_10_1_1_1_1_1_1}) from Lemma \ref{Lemma_1}, a key step is to find the equivalent state-feedback expression of the solution to the augmented BSDE $(\mathcal{Y},\mathcal{Z},\mathcal{K})$ in  (\ref{eq_3_10}). Unfortunately, due to (a)-(c), there is a technical restriction, which did not appear in \cite{Jiongmin_Yong_1} (see Remark \ref{Remark_7}). To overcome this limitation, we identify two different conditions, under which the Four-Step Scheme for \textbf{(LQ-L)} can be established; see Sections \ref{Section_3_1} (Assumption \ref{Assumption_3}) and \ref{Section_3_2} (Assumption \ref{Assumption_4}) (or (i)-(ii) in Section \ref{Section_1_2}).

Regarding the Four-Step Scheme in Section \ref{Section_3_1}, due to (a)-(c), the cross-coupling nature of the quadratic variations with respect to the continuous and pure jump martingale processes are induced. Hence, as shown in the ISRDE of (\ref{eq_3_21}), several invertibility conditions of the block matrices are essentially required to get the state-feedback form of the solution to the augmented BSDE $(\mathcal{Y},\mathcal{Z},\mathcal{K})$ in (\ref{eq_3_10}). This leads to the characterization of the ISRDE and the associated state-feedback type optimal solution of \textbf{(LQ-L)} in Theorem \ref{Theorem_2}. Note that in \cite[Section 3]{Jiongmin_Yong_1}, as there is only one quadratic variation of the Brownian motion, such complex invertibility conditions did not appear. As for the Four-Step Scheme in Section \ref{Section_3_2}, although there are no such complex invertibility conditions due to Assumption \ref{Assumption_4} (see the ISRDE in (\ref{eq_3_31})), Section \ref{Section_3_2} still generalizes \cite[Section 3]{Jiongmin_Yong_1} to jump-diffusion models as seen from Theorem \ref{Theorem_3}. Related discussions are also given in Remarks \ref{Remark_6_1_1_1_1_1_1} and \ref{Remark_6_1_1_1_1_1_2}. Moreover, as stated in Remark \ref{Remark_7}, due to (a)-(c), the ISRDEs of the leader in (\ref{eq_3_21}) and (\ref{eq_3_31}) are not symmetric and highly nonlinear, whereas the SRDE in \cite[(3.38)]{Jiongmin_Yong_1} is symmetric.

In summary, our paper solves the LQ stochastic Stackelberg differential game for general jump-diffusion systems with random coefficients. We obtain the open-loop type Stackelberg equilibrium and its state-feedback representation by establishing the stochastic maximum principle and Four-Step Schemes with jump diffusions, which are nontrivial generalizations of the problem without jumps in \cite{Jiongmin_Yong_1}. We mention that the Four-Step Schemes of \textbf{(LQ-F)} and \textbf{(LQ-L)} have not been reported in the existing literature. One additional contribution of our paper is to identify the stochastic maximum principle for indefinite LQ stochastic control of FBSDEs with jump diffusions and random coefficients, which has not been studied in the existing literature. 

The organization of the paper is as follows. We solve \textbf{(LQ-F)} and \textbf{(LQ-L)} in Sections \ref{Section_2} and \ref{Section_3}, respectively, where the open-loop type Stackelberg equilibrium and its explicit state-feedback representation are obtained. In Section \ref{Section_4}, we discuss some special cases and possible extensions of this paper.

}

\section{LQ Stochastic Optimal Control for the Follower}\label{Section_2}
Suppose that $(\overline{x},\overline{u}_2)$ is the optimal solution of \textbf{(LQ-F)}. We introduce the adjoint equation:
\begin{align}
\label{eq_2_1}
\begin{cases}
	\dd p(s)  =  - \Bigl [ A(s)^\top p(s-) + C(s)^\top q(s) + Q_2(s) \overline{x}(s-)   \\
~~~~~~ + \int_{E} F(s,e)^\top r(s,e) \lambda (\dd e) \Bigr ] \dd s + q(s) \dd B(s) + \int_{E} r(s,e) \widetilde{N}(\dd e, \dd s),~ s \in [t,T) \\
p(T)  =  M_2 x(T).
\end{cases}
\end{align}
Note that (\ref{eq_2_1}) is the (linear) backward SDE (BSDE) with jump diffusions and random coefficients. There is a unique solution of (\ref{eq_2_1}) with $(p,q,r) \in \mathcal{C}_{\mathbb{F}}^2(t,T;\mathbb{R}^n) \times \mathcal{L}_{\mathbb{F}}^2(t,T;\mathbb{R}^n) \times \mathcal{L}_{\mathbb{F},p}^2(t,T;\mathbb{R}^n)$ \cite[Lemma 2.4]{Tang_SICON_1994} (see also \cite[Theorem 2.1]{Barles_SSR_1997}).

Based on the stochastic maximum principle in \cite[Theorem 2.1]{Tang_SICON_1994}, $\overline{u}_2$ satisfies the following first-order optimality condition:
\begin{align}
\label{eq_2_2}
B_2(s)^\top p(s-) + D_2(s)^\top q(s) + \int_{E} G_2(s,e)^\top r(s,e) \lambda (\dd e) + R_2(s) \overline{u}_2(s) = 0	.
\end{align}
We now consider the following transformation in the \emph{Four-Step Scheme}:
\begin{align}
\label{eq_2_3}
p(s) = P(s) \overline{x}(s) + \phi(s),
\end{align}
where $P \in \mathbb{S}^n$ with $P(T) = M_2$ and $\phi \in \mathbb{R}^n$ with $\phi(T) = 0$. 
Assume that $P$ and $\phi$ are of the following form:
\begin{align}
\label{eq_2_4}
\begin{cases}
\dd P(s) = \Lambda_1(s) \dd s + L(s) \dd B(s) + \int_{E} Z(s,e) \widetilde{N}(\dd e, \dd s),~ s \in [t,T)	 \\
\dd \phi(s) = \Lambda_2(s) \dd s + \theta(s) \dd B(s) + \int_{E} \psi(s,e)  \widetilde{N}(\dd e, \dd s),~ s \in [t,T),
\end{cases}
\end{align}
where $L,Z \in \mathbb{S}^n$ and $\theta,\psi \in \mathbb{R}^n$. Explicit expressions of (\ref{eq_2_4}) are obtained below.

By applying It\^o's formula for general L\'evy-type stochastic integrals \cite[Theorem 4.4.13]{Applebaum_book} to (\ref{eq_2_3}) and using (\ref{eq_2_4}), we have
\begin{align}
\label{eq_2_5_1}
\dd p(s) & = \bigl [ 	\Lambda_1(s) \dd s + L(s) \dd B(s) + \int_{E} Z(s,e) \widetilde{N}(\dd e, \dd s)\bigr ] \overline{x}(s-)  \\
& ~~~ +  P(s-) \bigl [ A(s) \overline{x}(s-) + B_1(s) u_1(s) +  B_2(s) \overline{u}_2(s) \bigr ] \dd s \nonumber \\
&~~~ +  P(s-) \bigl [ C(s) \overline{x}(s-) + D_1(s) u_1(s) + D_2(s) \overline{u}_2(s) \bigr ] \dd B(s) \nonumber \\
&~~~ + \int_{E}  P(s-) \bigl [ F(s,e) \overline{x}(s-) + G_1(s,e) u_1(s) + G_2(s,e) \overline{u}_2(s) \bigr ] \widetilde{N}(\dd e, \dd s)  \nonumber \\
&~~~ + L(s) \bigl [ C(s) \overline{x}(s-) + D_1(s) u_1(s) + D_2(s) \overline{u}_2(s) \bigr ] \dd s \nonumber \\
&~~~ + \int_{E} Z(s,e) \bigl [ F(s,e) \overline{x}(s-) + G_1(s,e) u_1(s) + G_2(s,e) \overline{u}_2(s) \bigr ] \lambda (\dd e) \dd s \nonumber \\
&~~~ + \int_{E} Z(s,e) \bigl [ F(s,e) \overline{x}(s-) + G_1(s,e) u_1(s) + G_2(s,e) \overline{u}_2(s) \bigr ] \widetilde{N}(\dd e, \dd s) \nonumber \\
&~~~ + \Lambda_2(s) \dd s + \theta(s) \dd B(s) + \int_{E} \psi(s,e)  \widetilde{N}(\dd e, \dd s) \nonumber \\
&= - \bigl [ A(s)^\top p(s-) + C(s)^\top q(s) + Q_2(s) \overline{x}(s-)   \nonumber \\
&~~~ + \int_{E} F(s,e)^\top r(s,e) \lambda (\dd e) \bigr ] \dd s + q(s) \dd B(s) + \int_{E} r(s,e) \widetilde{N}(\dd e, \dd s). \nonumber
\end{align}
Note the coefficients of $B$ and $\widetilde{N}$ in (\ref{eq_2_5_1}). Then $q$ and $r$ can be written as
\begin{align}
\label{eq_2_5}
\begin{cases}
	q(s) = L(s) \overline{x}(s-) + P(s-) \bigl [ C(s) \overline{x}(s-) + D_1(s) u_1(s) + D_2(s) \overline{u}_2(s) \bigr ]  + \theta(s) \\
r(s,e) = Z(s,e) \overline{x}(s-) + P(s-) \bigl [ F(s,e) \overline{x}(s-) + G_1(s,e) u_1(s) + G_2(s,e) \overline{u}_2(s) \bigr ]  \\
~~~~~~~~~~ + Z(s,e)\bigl [ F(s,e) \overline{x}(s-) + G_1(s,e) u_1(s) + G_2(s,e) \overline{u}_2(s) \bigr ]  + \psi(s,e). 
\end{cases}
\end{align}
Substituting (\ref{eq_2_3}) and (\ref{eq_2_5}) into (\ref{eq_2_2}) yields
\begin{align}
\label{eq_2_7}
\overline{u}_2(s)   & = - \widehat{R}_2(s)^{-1} \widehat{S}_2(s)^\top \overline{x}(s-) - \widehat{R}_2(s)^{-1}  f(s),
\end{align}
provided that $\widehat{R}_2$ is invertible, where
\begin{align}
\label{eq_2_9_1}
\begin{cases}
\widehat{R}_2(s) := R_2(s) + D_2(s)^\top P(s-) D_2(s) \\
 ~~~  + \int_{E} G_2(s,e)^\top (P(s-) + Z(s,e)) G_2(s,e) \lambda (\dd e) \\
\widehat{S}_2(s) := \bigl ( B_2(s)^\top P(s-) + D_2(s)^\top L(s) + D_2(s)^\top P(s-) C(s)   \\
 ~~~   + \int_{E} \langle G_2(s,e), Z(s,e) + P(s-) F(s,e) + Z(s,e) F(s,e)\rangle \lambda(\dd e) \bigr )^\top  \\
\widehat{S}_1(s) := D_2(s)^\top P(s-) D_1(s)   \\
 ~~~ +  \int_{E} \langle G_2(s,e), P(s-) G_1(s,e) + Z(s,e) G_1(s,e) \rangle \lambda (\dd e) \\
 f(s) := B_2(s)^\top \phi(s-) + D_2(s)^\top \theta(s) \\
 ~~~ + \int_{E} G_2(s,e)^\top \psi(s,e) \lambda (\dd e) + \widehat{S}_1(s) u_1(s).
\end{cases}
\end{align}
Note that (\ref{eq_2_7}) is the optimal control with the state-feedback representation, which explicitly depends on $u_1 \in \mathcal{U}_1$. We can easily see that $\overline{u}_2 \in \mathcal{U}_2$ for a fixed $u_1 \in \mathcal{U}_1$.

By substituting (\ref{eq_2_7}) into (\ref{eq_1}), we have
\begin{align}
\label{eq_2_8}
\begin{cases}
\dd \overline{x}(s)  = \Big [ \widehat{A}(s) \overline{x}(s-) + \widehat{B}_2(s) 	 \phi(s-) + \widehat{H}_2(s) \theta(s) \\
~~~ + \int_{E} \widehat{K}_2(s,e) \psi(s,e) \lambda (\dd e) + \widehat{B}_1(s) u_1(s) \Big ] \dd s \\
~~~ + \Big [ \widehat{C}(s) \overline{x}(s-) + \widehat{H}_2(s)^\top \phi(s-) + \widetilde{H}_2(s)\theta(s) \\
~~~ + \int_{E} \widetilde{K}_2(s,e)\psi(s,e)\lambda(\dd e) + \widehat{D}_1(s) u_1(s) \Big ] \dd B(s) \\
~~~ +  \int_{E} \Big[ \widehat{F}(s,e)  \overline{x}(s-) + \widehat{K}_2(s,e)^\top \phi(s-) + \widetilde{K}_2(s,e)^\top \theta(s) \\
~~~ + \int_{E} \overline{K}_2(s,e,e^\prime) \psi(s,e^\prime) \lambda (\dd e^\prime) + \widehat{G}_1(s,e) u_1(s) \Bigr ] \widetilde{N} (\dd e, \dd s),~ s \in (t,T] \\
x(t) = a,
\end{cases}
\end{align}
where
\begin{align}
\label{eq_2_10_1_1_1}
\begin{cases}
\widehat{A}(s) := A(s) - B_2(s) \widehat{R}_2(s)^{-1} \widehat{S}_2(s-)^\top,~ \widehat{B}_2(s) := - B_2(s)\widehat{R}_2(s)^{-1} B_2(s)^\top \\
\widehat{H}_2(s) := - B_2(s)\widehat{R}_2(s)^{-1} D_2(s)^\top,~ \widehat{K}_2(s,e) := - B_2(s) \widehat{R}_2(s)^{-1}G_2(s,e)^\top
\\
\widehat{B}_1(s) := B_1(s) - B_2(s) \widehat{R}_2(s)^{-1} \widehat{S}_1(s) \\
\widehat{C}(s) := C(s) - D_2(s) \widehat{R}_2(s)^{-1} \widehat{S}_2(s)^\top \\
\widetilde{H}_2(s) := -D_2(s) \widehat{R}_2(s)^{-1} D_2(s)^\top,~ \widetilde{K}_2(s,e) := - D_2(s) \widehat{R}_2(s)^{-1} G_2(s,e)^\top \\
\widehat{D}_1(s) := D_1(s) - D_2(s) \widehat{R}_2(s)^{-1}  \widehat{S}_1(s) \\
\widehat{F}(s,e) := F(s,e) -  G_2(s,e) \widehat{R}_2(s)^{-1} \widehat{S}_2(s)^\top \\
\overline{K}_2(s,e,e^\prime) := - G_2(s,e) \widehat{R}_2(s)^{-1}  G_2(s,e^\prime)^\top  \\
\widehat{G}_1(s,e) := G_1(s,e) - G_2(s,e) \widehat{R}_2(s)^{-1} \widehat{S}_1(s).
\end{cases}
\end{align}
Note that (\ref{eq_2_8}) is the rational behavior of the follower under the (state-feedback type) optimal control in (\ref{eq_2_7}).
 
Substituting (\ref{eq_2_3}), (\ref{eq_2_7}) and (\ref{eq_2_5}) into (\ref{eq_2_5_1}), we can show that (\ref{eq_2_4}) has to satisfy the following symmetric integro-stochastic Riccati differential equation (ISRDE):
\begin{align}
\label{eq_2_11}
\begin{cases}
	\dd P(s)  = - \Bigl [ A(s)^\top P(s-) + P(s-) A(s) + Q_2(s) + L(s) C(s) 	\\
~~~~~~~ + C(s)^\top L(s) + C(s)^\top P(s-) C(s) + \int_{E} [ Z(s,e) F(s,e)  \\
~~~~~~~ + F(s,e)^\top Z(s,e)  + F(s,e)^\top P(s-) F(s,e)  \\
~~~~~~~ + F(s,e)^\top Z(s,e) F(s,e)] \lambda (\dd e)  - \widehat{S}_2(s) \widehat{R}_2(s)^{-1} \widehat{S}_2(s)^\top  \Bigr ] \dd s \\
~~~ + L(s) \dd B(s) + \int_{E} Z(s,e) \widetilde{N}(\dd e, \dd s),~ s \in [t,T) \\
P(T) = M_2,
\end{cases}	
\end{align}
and $\phi$ in (\ref{eq_2_4}) is the following BSDE with jumps and random coefficients:
\begin{align}	
\label{eq_2_12}
\begin{cases}
\dd \phi(s)  = - \Bigl [  \widehat{A}(s)^\top \phi(s-)  + \widehat{C}(s)^\top \theta(s) + \int_{E} \widehat{F}(s,e)^\top \psi(s,e) \lambda(\dd e)  \\
~~~~~~~ + \widehat{H}_1(s)^\top u_1(s) + \int_{E} \widehat{K}_1(s,e)^\top u_1(s) \lambda( \dd e)  \Bigr ]\dd s  \\
~~~ + \theta (s) \dd B(s)  + \int_{E} \psi(s,e) \widetilde{N}(\dd e, \dd s),~ s \in [t,T) \\
\phi(T) = 0.
\end{cases}
\end{align}
with $\widehat{H}_1$ and $\widehat{K}_1$ defined by
\begin{align}
\label{eq_2_13_1_1_1}
\begin{cases}
\widehat{H}_1(s) :=	\bigl ( C(s)^\top P(s-) D_1(s) + P(s-) B_1(s) \\
~~~~~~~~~~~~~ + L(s) D_1(s) -  \widehat{S}_2(s-) \widehat{R}_{2}(s)^{-1} \widehat{S}_1(s) \bigr )^\top
\\
\widehat{K}_1(s,e) := \bigl ( F(s,e)^\top P(s-) G_1(s,e) \\
~~~~~~~~~~~~~ + F(s,e)^\top Z(s,e) G_1(s,e) + Z(s,e) G_1(s,e)  \bigr )^\top.
\end{cases}
\end{align}

In summary, we have the following result:
\begin{theorem}\label{Theorem_1}
Assume that Assumptions \ref{Assumption_1} and \ref{Assumption_2} hold. Suppose that $(P,L,Z) \in \mathcal{C}_{\mathbb{F}}^2(t,T;\mathbb{S}^n) \times \mathcal{L}_{\mathbb{F}}^2(t,T;\mathbb{S}^n) \times \mathcal{L}_{\mathbb{F},p}^2(t,T;\mathbb{S}^n)$ is the solution of the ISRDE in (\ref{eq_2_11}), and $(\phi,\theta,\psi) \in \mathcal{C}_{\mathbb{F}}^2(t,T;\mathbb{R}^n) \times \mathcal{L}_{\mathbb{F}}^2(t,T;\mathbb{R}^n) \times \mathcal{L}_{\mathbb{F},p}^2(t,T;\mathbb{R}^n)$ is the solution of the BSDE with jump diffusions in (\ref{eq_2_12}). Assume that $\widehat{R}_2$ defined in (\ref{eq_2_9_1}) is uniformly positive definite for a.e. $(\omega,s) \in \Omega \times [0,T]$. Then the state-feedback representation of the optimal control for \textbf{(LQ-F)} can be written as 
\begin{align}	
\label{eq_2_13}
\overline{u}_2(s)   & = - \widehat{R}_2(s)^{-1} \widehat{S}_2(s)^\top x(s-) - \widehat{R}_2(s)^{-1} f(s),
\end{align}
where $f$ is defined in (\ref{eq_2_9_1}). 
Moreover, the optimal cost of \textbf{(LQ-F)} under (\ref{eq_2_13}) is
\begin{align}
\label{eq_2_14}
& J_2(a; u_1,\overline{u}_2)  = \inf_{u_2 \in \mathcal{U}_2} J_2(a;u_1,u_2) \\
	& = \mathbb{E} \Bigl [|a|^2_{P(0)}  + 2 \langle a, \phi(0) \rangle  +  \int_t^T  |u_1(s)|^2_{D_1(s)^\top P(s-) D_1(s)} \dd s  \nonumber \\
	&~~~  + \int_t^T \int_{E} |u_1(s)|^2_{G_1(s,e)^\top P(s-) G_1(s,e) + G_1(s,e)^\top Z(s,e) G_1(s,e)} \lambda (\dd e) \dd s \nonumber \\
	&~~~ + 2 \int_t^T  \langle u_1(s), B_1(s) ^\top \phi(s-) + D_1(s)^\top \theta(s) \rangle  \dd s \nonumber \\
	&~~~ + 2 \int_t^T \int_{E} \langle u_1(s), G_1(s,e)^\top \psi(s,e) \rangle   \lambda (\dd e) \dd s - \int_t^T |f(s)|^2_{\widehat{R}_2(s)^{-1}} \dd s \Bigr ]. \nonumber
\end{align}
\end{theorem}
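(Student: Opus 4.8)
The plan is to prove optimality of the feedback control (\ref{eq_2_13}) and to identify the optimal cost (\ref{eq_2_14}) by the generalized completion of squares method; observe that the feedback form (\ref{eq_2_7}), the ISRDE (\ref{eq_2_11}), and the BSDE (\ref{eq_2_12}) have already been \emph{derived} from the necessary conditions (\ref{eq_2_2})--(\ref{eq_2_5}), so what remains is the \emph{sufficiency}/verification step. First I would fix an arbitrary $u_1 \in \mathcal{U}_1$ and let $x$ solve (\ref{eq_1}) under $u_1$ together with an \emph{arbitrary} admissible $u_2 \in \mathcal{U}_2$. The key auxiliary quantity is the quadratic-affine functional $V(s) := |x(s)|^2_{P(s)} + 2\langle \phi(s), x(s)\rangle$, whose terminal value is $V(T) = |x(T)|^2_{M_2}$ (using $P(T)=M_2$ and $\phi(T)=0$) and whose initial value is the deterministic constant $|a|^2_{P(0)} + 2\langle a,\phi(0)\rangle$ appearing in (\ref{eq_2_14}).

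Next I would apply It\^o's formula for L\'evy-type stochastic integrals \cite[Theorem 4.4.13]{Applebaum_book} to $V(s)$, using the dynamics of $x$ from (\ref{eq_1}), of $P$ from (\ref{eq_2_11}), and of $\phi$ from (\ref{eq_2_12}). Taking expectations annihilates the stochastic integrals with respect to $B$ and $\widetilde{N}$ (they are genuine martingales under the stated integrability in $\mathcal{C}_{\mathbb{F}}^2$, $\mathcal{L}_{\mathbb{F}}^2$, and $\mathcal{L}_{\mathbb{F},p}^2$), yielding an identity that expresses $\mathbb{E}[|x(T)|^2_{M_2}]$ as $|a|^2_{P(0)} + 2\langle a,\phi(0)\rangle$ plus $\mathbb{E}\int_t^T(\text{drift of } V)\,\dd s$. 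Adding $\mathbb{E}\int_t^T [|x(s)|^2_{Q_2(s)} + |u_2(s)|^2_{R_2(s)}]\,\dd s$ to both sides rewrites $J_2(a;u_1,u_2)$ as $|a|^2_{P(0)}+2\langle a,\phi(0)\rangle$ plus a single integrand that is quadratic in $(x,u_1,u_2)$. The ISRDE (\ref{eq_2_11}) is engineered so that its drift cancels every pure $|x|^2$-term (including the Brownian cross-terms $LC + C^\top L + C^\top P C$ and their jump analogues $ZF + F^\top Z + F^\top P F + F^\top Z F$ integrated against $\lambda$), while the $-\widehat{S}_2 \widehat{R}_2^{-1}\widehat{S}_2^\top$ term supplies exactly the cross-quadratic needed to complete the square in $u_2$; likewise the BSDE (\ref{eq_2_12}) with forcing $\widehat{H}_1,\widehat{K}_1$ absorbs the linear-in-$x$ terms coupling to $\phi,\theta,\psi$ and $u_1$. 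After this cancellation the integrand collapses to $|u_2 - \overline{u}_2|^2_{\widehat{R}_2} - |f|^2_{\widehat{R}_2^{-1}}$ plus the displayed $u_1$-dependent terms of (\ref{eq_2_14}), where $\overline{u}_2$ is (\ref{eq_2_13}) and $\widehat{R}_2,\widehat{S}_2,f$ are as in (\ref{eq_2_9_1}). Since $\widehat{R}_2$ is uniformly positive definite, $\mathbb{E}\int_t^T |u_2(s)-\overline{u}_2(s)|^2_{\widehat{R}_2(s)}\,\dd s \ge 0$ with equality iff $u_2 = \overline{u}_2$ a.e., which simultaneously proves optimality of (\ref{eq_2_13}) and that the minimal cost equals (\ref{eq_2_14}).

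The main obstacle is the careful bookkeeping of the jump contributions in the It\^o expansion and in the square completion. Unlike the purely Brownian case, the quadratic variation of the compensated Poisson integrals couples the jump part $Z$ of $P$ with both the state jump coefficient $F$ and the control jump coefficients $G_i$, producing the ``doubled'' terms $F^\top Z F$ and $Z G_1$, the contribution $\int_{E} G_2(s,e)^\top (P(s-)+Z(s,e))G_2(s,e)\,\lambda(\dd e)$ that enters $\widehat{R}_2$, the $\langle G_2, Z + PF + ZF\rangle$ terms in $\widehat{S}_2$, and the $F^\top Z G_1 + Z G_1$ terms in $\widehat{K}_1$. I must verify that, after substituting $q$ and $r$ from (\ref{eq_2_5}) into the compensator integrals $\int_{E}(\cdots)\lambda(\dd e)$, these jump cross-terms precisely match those built into (\ref{eq_2_9_1}), (\ref{eq_2_11}), and (\ref{eq_2_13_1_1_1}), so that all $|x|^2$-terms vanish and the residual quadratic in $u_2$ is exactly $|u_2-\overline{u}_2|^2_{\widehat{R}_2}$. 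Symmetry of $P,L,Z$ is used repeatedly to symmetrize the cross-terms, and checking that the $\lambda$-integrated jump terms close up correctly is precisely the generalized completion of squares step that distinguishes this result from \cite[Theorem 2.3]{Jiongmin_Yong_1}.
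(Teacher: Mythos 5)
Your proposal is correct and follows essentially the same route as the paper: the paper's proof applies It\^o's formula for L\'evy-type integrals to $\dd\langle x(s),P(s)x(s)\rangle + 2\,\dd\langle x(s),\phi(s)\rangle$ for an arbitrary $u_2\in\mathcal{U}_2$, takes expectations to kill the martingale terms, uses the ISRDE (\ref{eq_2_11}) and BSDE (\ref{eq_2_12}) to cancel the state-quadratic and state-linear terms, and completes the square in $u_2$ to reach (\ref{eq_2_15}), from which uniform positive definiteness of $\widehat{R}_2$ gives optimality of (\ref{eq_2_13}) and the cost (\ref{eq_2_14}). Your identification of the jump cross-terms ($F^\top Z F$, $ZG_1$, $\int_E G_2^\top(P+Z)G_2\,\lambda(\dd e)$, etc.) as the essential bookkeeping burden matches exactly the computation carried out in (\ref{eq_A_1})--(\ref{eq_A_3}).
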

\begin{proof}
For a given $u_1 \in \mathcal{U}_1$, let $\overline{x}$ be the state process controlled by $\overline{u}_2$ in (\ref{eq_2_13}), which is equivalent to (\ref{eq_2_8}). Then from Assumptions \ref{Assumption_1} and \ref{Assumption_2}, for any $u_1 \in \mathcal{U}_1$, (\ref{eq_2_8}) admits a unique c\`adl\`ag solution in $\mathcal{C}_{\mathbb{F}}^2(t,T;\mathbb{R}^n)$ \cite{Applebaum_book,Oksendal_book_jump,Moon_MCRF_2020}. Since $(\phi,\theta,\psi)$ in (\ref{eq_2_12}) is a linear BSDE, it admits a unique solution of $(\phi,\theta,\psi) \in \mathcal{C}_{\mathbb{F}}^2(t,T;\mathbb{R}^n) \times \mathcal{L}_{\mathbb{F}}^2(t,T;\mathbb{R}^n) \times \mathcal{L}_{\mathbb{F},p}^2(t,T;\mathbb{R}^n)$ \cite[Lemma 2.4]{Tang_SICON_1994}. For a fixed $(a,u_1) \in \mathbb{R}^n \times \mathcal{U}_1$, it holds that $\overline{u}_2 \in \mathcal{U}_2$.

For any $u_2 \in \mathcal{U}_2$, we apply It\^o's formula for general L\'evy-type stochastic integrals  \cite[Theorem 4.4.13]{Applebaum_book} to $\dd \langle x(s), P(s) x(s) \rangle + 2 \dd \langle x(s), \phi(s) \rangle$, where $x$ is the SDE in (\ref{eq_1}), $(P,L,Z)$ is solution of the ISRDE in (\ref{eq_2_11}), and $(\phi,\theta,\psi)$ is the solution of the BSDE with jump diffusions in (\ref{eq_2_12}). Then ($\Tr(\cdot)$ denotes the trace operator)
\begin{align}
\label{eq_A_1}
& \dd \langle x(s),	P(s) x(s) \rangle \\
& = \dd \Tr ( P(s) x(s)x(s)^\top ) \nonumber \\
& = 2 \Bigl [ A(s) x(s-) + B_1(s) u_1(s) +  B_2(s) u_2(s) \Bigr ]^\top P(s-) x(s-)  \dd s \nonumber \\ 
&~~~ + \Bigl [ C(s) x(s-) + D_1(s) u_1(s) + D_2(s) u_2(s) \Bigr ]^\top \nonumber \\
&~~~~~~~~ \times P(s-) \Bigl [ C(s) x(s-) + D_1(s) u_1(s) + D_2(s) u_2(s) \Bigr ]  \dd s \nonumber \\ 
&~~~ + \int_{E} \Bigl [ F(s,e) x(s-) + G_1(s,e) u_1(s) + G_2(s,e) u_2(s) \Bigr ]^\top \nonumber \\
&~~~~~~~~ \times (P(s-) + Z(s,e)) \Bigl [ F(s,e) x(s-) + G_1(s,e) u_1(s) + G_2(s,e) u_2(s) \Bigr ] \lambda (\dd e) \dd s \nonumber \\
&~~~ + 2 \Bigl [  C(s) x(s-) + D_1(s) u_1(s) + D_2(s) u_2(s) \Bigr ]^\top L(s) x(s-) \dd s \nonumber \\
&~~~ + 2 \int_{E} \Bigl [ F(s,e) x(s-) + G_1(s,e) u_1(s) + G_2(s,e) u_2(s) \Bigr ]^\top Z(s,e) x(s-) \lambda (\dd e) \dd s \nonumber \\
&~~~ + x(s-)^\top \dd P(s) x(s-) + [\cdots] \dd B(s) + \int_{E} [\cdots] \widetilde{N}(\dd e, \dd s), \nonumber
\end{align}
and
\begin{align}
\label{eq_A_2}
& 2 \dd \langle x(s), \phi(s) \rangle \\
& = 2 \Bigl [ A(s) x(s-) + B_1(s) u_1(s) +  B_2(s) u_2(s) \Bigr ]^\top \phi(s-) \dd s \nonumber \\
&~~~ + 2 \Bigl [ C(s) x(s-) + D_1(s) u_1(s) + D_2(s) u_2(s) \Bigr ]^\top \theta(s) \dd s \nonumber \\
&~~~ + 2 \int_{E} \Bigl [ F(s,e) x(s-) + G_1(s,e) u_1(s) + G_2(s,e) u_2(s) \Bigr ]^\top \psi(s,e) \lambda (\dd e) \dd s \nonumber \\
&~~~ + 2 x(s-)^\top \dd \phi(s)   + [\cdots] \dd B(s) + \int_{E} [\cdots] \widetilde{N}(\dd e, \dd s). \nonumber
\end{align}
Note that under Assumptions \ref{Assumption_1} and \ref{Assumption_2}, the stochastic integrals of the Brownian motion and the compensated Poisson process in (\ref{eq_A_1}) and (\ref{eq_A_2}) are $\mathcal{F}_s$-martingales \cite[page 287]{Applebaum_book}. Hence, their expectations are zero.

By integrating (\ref{eq_A_1}) and (\ref{eq_A_2}) from $0$ to $T$ and taking the expectation, we have (note the definition of $J_2$ in (\ref{eq_3}), the terminal conditions of the ISRDE in (\ref{eq_2_11}) and the BSDE in (\ref{eq_2_12}), and recall (\ref{eq_2_9_1}))
\begin{align}
\label{eq_A_3}
& J_2(a;u_2,u_2) - \mathbb{E}\Bigl [\langle a, P(0) a \rangle + 2 \langle a, \phi(0) \rangle \Bigr ]  \\
& = \mathbb{E} \Bigl [ \int_t^T \langle u_2(s), \widehat{R}_2(s) u_2(s) + \widehat{S}_2(s)^\top x(s-) + f(s) \rangle \dd s \nonumber \\
&~~~ + \int_t^T  2 \langle \widehat{S}_2(s)^\top x(s-), \widehat{R}_2(s)^{-1} \widehat{S}_2(s)^\top x(s-) + R_2(s)^{-1} f(s) \rangle \dd s \nonumber \\
&~~~ + \int_t^T  |u_1(s)|^2_{D_1(s)^\top P(s-) D_1(s)} \dd s \nonumber \\
&~~~  + \int_t^T \int_{E} |u_1(s)|^2_{G_1(s,e)^\top P(s-) G_1(s,e) + G_1(s,e)^\top Z(s,e) G_1(s,e)} \lambda (\dd e) \dd s \nonumber \\
	&~~~ + 2 \int_t^T  \langle u_1(s), B_1(s) ^\top \phi(s-) + D_1(s)^\top \theta(s) \rangle  \dd s \nonumber \\
	&~~~ + 2 \int_t^T \int_{E} \langle u_1(s), G_1(s,e)^\top \psi(s,e) \rangle \lambda (\dd e) \dd s \Bigr ]. \nonumber
\end{align}
Then by completing the integrands in (\ref{eq_A_3}) with respect to $u_2$, $J_2$ can equivalently be written as follows: (recall (\ref{eq_2_9_1}))
\begin{align}	
\label{eq_2_15}
& J_2(a;u_1,u_2)  = \mathbb{E} \Bigl [|a|^2_{P(0)}  + 2 \langle a, \phi(0) \rangle     \\ 
	&~~~ + \int_t^T \Bigl | u_2(s) + \widehat{R}_2(s)^{-1} \widehat{S}_2(s)^\top x(s-) + \widehat{R}_2(s)^{-1} f(s) \Bigr |^2_{\widehat{R}_2(s)} \dd s \nonumber \\
	&~~~ +  \int_t^T  |u_1(s)|^2_{D_1(s)^\top P(s-) D_1(s)} \dd s \nonumber \\
	&~~~  + \int_t^T \int_{E} |u_1(s)|^2_{G_1(s,e)^\top P(s-) G_1(s,e) + G_1(s,e)^\top Z(s,e) G_1(s,e)} \lambda (\dd e) \dd s \nonumber \\
	&~~~ + 2 \int_t^T  \langle u_1(s), B_1(s) ^\top \phi(s-) + D_1(s)^\top \theta(s) \rangle  \dd s \nonumber \\
	&~~~ + 2 \int_t^T \int_{E} \langle u_1(s), G_1(s,e)^\top \psi(s,e) \rangle \lambda (\dd e) \dd s - \int_t^T |f(s)|^2_{\widehat{R}_2(s)^{-1}} \dd s \Bigr ].  \nonumber
\end{align}

Since $\widehat{R}_2 > 0$ for a.e. $(\omega,s) \in \Omega \times [0,T]$, for a given $u_1 \in \mathcal{U}_1$, we have
\begin{align}
\label{eq_2_16}
J_2(a;u_1,u_2) \geq 	J_2(a;u_1,\overline{u}_2),~ \forall u_2 \in \mathcal{U}_2.
\end{align}
This shows that (\ref{eq_2_13}) is the optimal control of \textbf{(LQ-F)}, and (\ref{eq_2_8}) is the corresponding optimal state trajectory. From (\ref{eq_2_15}) and (\ref{eq_2_16}), we can easily see that (\ref{eq_2_14}) is the optimal cost of \textbf{(LQ-F)}. This completes the proof.	
\end{proof}

\begin{remark}\label{Remark_3_1_1_1_1}
{\color{black}
\begin{enumerate}[(i)]
\item Recently, the well-posedness (solvability) of (\ref{eq_2_11}) is shown in \cite{Zhang_SICON_2020}, which considers the (one-player) LQ control problem for jump-diffusion models with random coefficients. Specifically, in view of \cite[Theorems 4.1 and 5.2]{Zhang_SICON_2020}, when $R_2$ is uniformly positive definite, and $Q_2$ and $M_2$ are positive semidefinite for a.e. $(\omega,s) \in \Omega \times [0,T]$, there exists a unique solution of (\ref{eq_2_11}) with $(P,L,Z) \in \mathcal{C}_{\mathbb{F}}^2(t,T;\mathbb{S}^n) \times \mathcal{L}_{\mathbb{F}}^2(t,T;\mathbb{S}^n) \times \mathcal{L}_{\mathbb{F},p}^2(t,T;\mathbb{S}^n)$. We note that for the case without jumps, the ISRDE in (\ref{eq_2_11}) degenerates to the SRDE of \cite[(1.5)]{Jiongmin_Yong_1}.
\item The proof of Theorem \ref{Theorem_1} is known as the completion of squares method, where a key argument is to obtain the equivalent objective functional of $J_2$ in (\ref{eq_2_15}) that is quadratic in $u_2$. The proof of Theorem \ref{Theorem_1} is more involved than that for the case without jumps in \cite[Theorem 2.3]{Jiongmin_Yong_1}, as in our case there exist several different quadratic variations with respect to the Brownian potion and the (compensated) Poisson process (see the complete proof in []).
\end{enumerate}
}
\end{remark}

\section{LQ Stochastic Optimal Control for the Leader}\label{Section_3}
This section addresses \textbf{(LQ-L)} in (\ref{eq_5}). Note that the optimization constraint of \textbf{(LQ-L)} is (\ref{eq_2_8}) and (\ref{eq_2_12}), which characterize the rational behavior of the follower under (\ref{eq_2_13}). That is, \textbf{(LQ-L)} can be rewritten as follows:
\begin{align}	
\label{eq_3_1}
\textbf{(LQ-L)}~~J_1(a;\overline{u}_1,\overline{u}_2) = \inf_{u_1 \in \mathcal{U}_1} J_1(a;u_1,\overline{u}_2),~ \text{subject to (\ref{eq_2_8}) and (\ref{eq_2_12})}.
\end{align}
\begin{remark}
{\color{black}It is easy to see that \textbf{(LQ-L)} is a class of indefinite LQ stochastic optimal control problems for FBSDEs with jump diffusions and random coefficients.}
\end{remark}

We first state the stochastic maximum principle for \textbf{(LQ-L)}:
\begin{lemma}\label{Lemma_1}
Suppose that Assumptions \ref{Assumption_1} and \ref{Assumption_2} hold. Let $\overline{u}_1 \in \mathcal{U}_1$, where $\overline{x}$ is the corresponding state trajectory. Let $(\overline{x},\beta) \in \mathcal{C}_{\mathbb{F}}^2(t,T;\mathbb{R}^n \times \mathbb{R}^n)$, $(\phi,\theta,\psi) \in \mathcal{C}_{\mathbb{F}}^2(t,T;\mathbb{R}^n) \times \mathcal{L}_{\mathbb{F}}^2(t,T;\mathbb{R}^n) \times \mathcal{L}_{\mathbb{F},p}^2(t,T;\mathbb{R}^n)$ and $(\alpha,\eta,\gamma) \in \mathcal{C}_{\mathbb{F}}^2(t,T;\mathbb{R}^n) \times \mathcal{L}_{\mathbb{F}}^2(t,T;\mathbb{R}^n) \times \mathcal{L}_{\mathbb{F},p}^2(t,T;\mathbb{R}^n)$ be the solution of the following coupled FBSDEs:
\begin{align}
\label{eq_3_2}
\begin{cases}
\dd \overline{x}(s)  = \Big [ \widehat{A}(s) \overline{x}(s-) + \widehat{B}_2(s) 	 \phi(s-) + \widehat{H}_2(s) \theta(s) \\
~~~ + \int_{E} \widehat{K}_2(s,e) \psi(s,e) \lambda (\dd e) + \widehat{B}_1(s) \overline{u}_1(s) \Big ] \dd s \\
~~~ + \Big [ \widehat{C}(s) \overline{x}(s-) + \widehat{H}_2(s)^\top \phi(s-) + \widetilde{H}_2(s)\theta(s) \\
~~~ + \int_{E} \widetilde{K}_2(s,e)\psi(s,e)\lambda(\dd e) + \widehat{D}_1(s) \overline{u}_1(s) \Big ] \dd B(s) \\
~~~ +  \int_{E} \Big[ \widehat{F}(s,e)  \overline{x}(s-) + \widehat{K}_2(s,e)^\top \phi(s-) + \widetilde{K}_2(s,e)^\top \theta(s) \\
~~~ + \int_{E} \overline{K}_2(s,e,e^\prime) \psi(s,e^\prime) \lambda (\dd e^\prime) + \widehat{G}_1(s,e) \overline{u}_1(s) \Bigr ] \widetilde{N} (\dd e, \dd s),~ s \in (t,T] \\
\dd \alpha(s) = - \Bigl [ \widehat{A}(s)^\top \alpha(s-) + 	\widehat{C}(s)^\top \eta(s) + \int_{E} \widehat{F}(s,e)^\top \gamma(s,e) \lambda (\dd e) \\
~~~ + Q_1(s) \overline{x}(s-)  \Bigr ] \dd s + \eta(s) \dd B(s) + \int_{E} \gamma(s,e) \widetilde{N}(\dd e, \dd s),~ s \in [t,T)\\
\dd \phi(s)  = - \Bigl [  \widehat{A}(s)^\top \phi(s-)  + \widehat{C}(s)^\top \theta(s) + \int_{E} \widehat{F}(s,e)^\top \psi(s,e) \lambda(\dd e) + \widehat{H}_1(s)^\top \overline{u}_1(s) \\
~~~ + \int_{E} \widehat{K}_1(s,e)^\top \overline{u}_1(s) \lambda( \dd e)  \Bigr ]\dd s + \theta (s) \dd B(s)  + \int_{E} \psi(s,e) \widetilde{N}(\dd e, \dd s),~ s \in [t,T) \\
\dd \beta(s)  = \Bigl [ \widehat{A}(s)\beta(s-) + \widehat{B}_2(s) \alpha(s-) + \widehat{H}_2(s) \eta(s) + \int_{E} \widehat{K}_2(s,e) \gamma(s,e) \lambda(\dd e) \Bigr ] \dd s \\
~~~ + \Bigl [ \widehat{C}(s)\beta(s-) + \widehat{H}_2(s)^\top \alpha(s-) + \widetilde{H}_2(s) \eta(s) + \int_{E} \widetilde{K}_2(s,e) \gamma(s,e) \lambda(\dd e) \Bigr ] \dd B(s) \\
~~~ + \int_{E} \Bigl [ \widehat{F}(s,e)\beta(s-) + \widehat{K}_2(s,e)^\top \alpha(s-) \\
~~~~~~~~~~ + \widetilde{K}_2(s,e)^\top \eta(s) + \int_{E} \overline{K}_2(s,e,e^\prime) \gamma(s,e^\prime) \lambda (\dd e^\prime) \Bigr ]\widetilde{N}(\dd e, \dd s),~ s \in (t,T] \\
\overline{x}(t) = a,~ \beta(t) = 0,~ \phi(T) = 0,~ \alpha(T) = M_1 \overline{x}(T).
\end{cases}	
\end{align}
For $u_1^\prime \in \mathcal{U}_1$, let $(x^\prime,\beta^\prime) \in \mathcal{C}_{\mathbb{F}}^2(t,T;\mathbb{R}^n \times \mathbb{R}^n)$, $(\phi^\prime,\theta^\prime,\psi^\prime) \in \mathcal{C}_{\mathbb{F}}^2(t,T;\mathbb{R}^n) \times \mathcal{L}_{\mathbb{F}}^2(t,T;\mathbb{R}^n) \times \mathcal{L}_{\mathbb{F},p}^2(t,T;\mathbb{R}^n)$ and $(\alpha^\prime,\eta^\prime,\gamma^\prime) \in \mathcal{C}_{\mathbb{F}}^2(t,T;\mathbb{R}^n) \times \mathcal{L}_{\mathbb{F}}^2(t,T;\mathbb{R}^n) \times \mathcal{L}_{\mathbb{F},p}^2(t,T;\mathbb{R}^n)$ be the coupled FBSDEs in (\ref{eq_3_2}), where the initial condition holds $(x^\prime(t),\beta^\prime(t),\phi^\prime(t),\alpha^\prime(t)) =  (0,0,0,M_1 x^\prime(T))$. Assume that the following holds:
\begin{align}
\label{eq_3_3}
& \mathbb{E} \Bigl [ \int_0^T  \langle u_1^\prime(s), R_1(s) u_1^\prime(s) \rangle + \Bigl \langle u_1^\prime(s), \widehat{B}_1(s)^\top \alpha^\prime(s-) + \widehat{D}_1(s)^\top \eta^\prime(s) \\
&~~~ + \int_{E} \widehat{G}_1(s)^\top \gamma^\prime(s,e) \lambda (\dd e) + \widehat{H}_1(s) \beta^\prime(s-) + \int_{E} \widehat{K}_1(s,e) \beta^\prime(s-) \lambda (\dd e) \Bigr \rangle \dd s \Bigr ] \geq 0. \nonumber
\end{align}
Then $\overline{u}_1 \in \mathcal{U}_1$ is the optimal control for \textbf{(LQ-L)} if and only if the following first-order optimality condition holds:
\begin{align}
\label{eq_3_4}
& R_1(s) \overline{u}_1(s) + \widehat{B}_1(s)^\top \alpha(s-) + \widehat{D}_1(s)^\top \eta(s) \\
&~~~ + \int_{E} \widehat{G}_1(s)^\top \gamma(s,e) \lambda (\dd e) + \widehat{H}_1(s) \beta(s-) + \int_{E} \widehat{K}_1(s,e) \beta(s-) \lambda (\dd e)  = 0. \nonumber	
\end{align}
\end{lemma}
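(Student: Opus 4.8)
The plan is to use the convex variational method, exploiting that the FBSDE constraint (\ref{eq_2_8})--(\ref{eq_2_12}) is affine in $u_1$ while $J_1$ is quadratic in $(x,u_1)$. First I would fix the candidate $\overline{u}_1$ and, for an arbitrary direction $u_1' \in \mathcal{U}_1$ and $\epsilon \in \mathbb{R}$, form the perturbation $u_1^\epsilon := \overline{u}_1 + \epsilon u_1'$. By linearity of (\ref{eq_3_2}), the state and follower-adjoint along $u_1^\epsilon$ split as $\overline{x} + \epsilon x'$, $\phi + \epsilon \phi'$, and so on, where the primed processes solve the same coupled system driven only by $u_1'$ with homogeneous data $x'(t)=0$, $\phi'(T)=0$; the corresponding leader adjoints $(\alpha',\eta',\gamma',\beta')$ are exactly the primed processes appearing in (\ref{eq_3_3}). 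Substituting into $J_1$ and expanding in $\epsilon$ gives
\[
J_1(a;u_1^\epsilon,\overline{u}_2) = J_1(a;\overline{u}_1,\overline{u}_2) + \epsilon\, \delta J_1(u_1') + \epsilon^2 Q(u_1'),
\]
with first variation $\delta J_1(u_1') = 2\mathbb{E}\big[\int_t^T (\langle Q_1 \overline{x}, x'\rangle + \langle R_1 \overline{u}_1, u_1'\rangle)\dd s + \langle M_1 \overline{x}(T), x'(T)\rangle\big]$ and quadratic remainder $Q(u_1') = \mathbb{E}\big[\int_t^T(|x'|^2_{Q_1} + |u_1'|^2_{R_1})\dd s + |x'(T)|^2_{M_1}\big]$; since $J_1$ is exactly quadratic in $u_1$, this expansion is exact.

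The heart of the argument is a duality computation expressing the state-dependent part of $\delta J_1$ through the adjoints $(\alpha,\eta,\gamma,\beta)$. I would apply It\^o's product rule for L\'evy-type integrals \cite[Theorem 4.4.13]{Applebaum_book} to $\langle \alpha(s), x'(s)\rangle$ and to $\langle \beta(s), \phi'(s)\rangle$, integrate over $[t,T]$, and take expectations. In $\dd\langle \alpha, x'\rangle$ the drift terms $\widehat{A}^\top \alpha$, $\widehat{C}^\top \eta$, $\int_E \widehat{F}^\top \gamma\,\lambda(\dd e)$ coming from $\dd\alpha$ cancel the diagonal contributions $\widehat{A}x'$, $\widehat{C}x'$, $\widehat{F}x'$ produced by $\dd x'$ together with its Brownian and compensated-Poisson quadratic covariations; using $x'(t)=0$ and $\alpha(T)=M_1\overline{x}(T)$ then relates $\mathbb{E}[\langle M_1\overline{x}(T),x'(T)\rangle]$ to $-\mathbb{E}[\int_t^T \langle Q_1\overline{x},x'\rangle\dd s]$ plus coupling terms carrying $(\phi',\theta',\psi')$ and $u_1'$. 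The analogous computation for $\dd\langle \beta, \phi'\rangle$, now using $\beta(t)=0$ and $\phi'(T)=0$, yields a second identity whose left side is zero. The decisive structural fact is that the coupling blocks in (\ref{eq_2_10_1_1_1}) are either symmetric ($\widehat{B}_2$, $\widetilde{H}_2$, $\overline{K}_2$) or occur as transpose pairs ($(\widehat{H}_2,\widehat{H}_2^\top)$, $(\widehat{K}_2,\widehat{K}_2^\top)$, $(\widetilde{K}_2,\widetilde{K}_2^\top)$), so that the entire $(\phi',\theta',\psi')$-dependent blocks of the two identities coincide. Subtracting them cancels every such cross term and leaves the clean relation
\[
\begin{aligned}
&\mathbb{E}\Big[\int_t^T \langle Q_1\overline{x},x'\rangle\dd s + \langle M_1\overline{x}(T),x'(T)\rangle\Big] \\
&\quad = \mathbb{E}\Big[\int_t^T \big\langle u_1',\, \widehat{B}_1^\top\alpha + \widehat{D}_1^\top\eta + \int_E \widehat{G}_1^\top\gamma\,\lambda(\dd e) + \widehat{H}_1\beta + \int_E \widehat{K}_1\beta\,\lambda(\dd e)\big\rangle\dd s\Big].
\end{aligned}
\]
Inserting this into $\delta J_1$ gives $\tfrac12\delta J_1(u_1') = \mathbb{E}[\int_t^T \langle u_1', R_1\overline{u}_1 + \widehat{B}_1^\top\alpha + \cdots\rangle\dd s]$, the pairing of $u_1'$ with the left-hand side of (\ref{eq_3_4}); running the identical duality on the homogeneous primed system identifies $Q(u_1')$ with the left-hand side of (\ref{eq_3_3}).

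With these two facts the equivalence closes by convex analysis. For necessity, if $\overline{u}_1$ is optimal then $\epsilon \mapsto J_1(a;u_1^\epsilon,\overline{u}_2)$ is minimized at $\epsilon=0$, forcing $\delta J_1(u_1')=0$; since $u_1'\in\mathcal{U}_1$ is arbitrary and may be replaced by $-u_1'$, the factor multiplying $u_1'$ must vanish for a.e. $(\omega,s)\in\Omega\times[t,T]$, which is exactly (\ref{eq_3_4}). For sufficiency, if (\ref{eq_3_4}) holds then $\delta J_1(u_1')=0$ for all $u_1'$, and the exact expansion with $\epsilon=1$ together with hypothesis (\ref{eq_3_3}) gives $J_1(a;\overline{u}_1+u_1',\overline{u}_2) - J_1(a;\overline{u}_1,\overline{u}_2) = Q(u_1') \geq 0$, so $\overline{u}_1$ is optimal. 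I expect the duality computation with jumps to be the main obstacle: It\^o's product rule now generates quadratic-covariation corrections from both the continuous (Brownian) and the pure-jump (compensated Poisson) martingales, the latter introducing the extra $\int_E(\cdot)\lambda(\dd e)$ integrals absent in \cite{Jiongmin_Yong_1}, and verifying that the forward-backward cross terms in $(\phi',\theta',\psi')$ cancel exactly—which rests on the precise symmetry and transpose structure of $\widehat{B}_2,\widehat{H}_2,\widehat{K}_2,\widetilde{H}_2,\widetilde{K}_2,\overline{K}_2$ in (\ref{eq_2_10_1_1_1})—is the delicate bookkeeping driving the whole result.
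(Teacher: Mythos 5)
Your proposal is correct and follows essentially the same route as the paper: the same exact quadratic expansion of $J_1$ along the perturbation $\overline{u}_1+\epsilon u_1'$, the same duality computation via It\^o's product rule applied to $\langle x',\alpha\rangle$ and $\langle\phi',\beta\rangle$ (relying on the symmetry of $\widehat{B}_2,\widetilde{H}_2,\overline{K}_2$ and the transpose pairing of the remaining coupling blocks to cancel the $(\phi',\theta',\psi')$ cross terms), the identification of the quadratic remainder with $J_1(0;u_1',\overline{u}_2)$ and hence with the left-hand side of (\ref{eq_3_3}), and the same convexity argument to close both directions of the equivalence. No substantive differences from the paper's proof.
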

\begin{proof}
We note that $(\overline{x},\phi,\theta,\psi) \in \mathcal{C}_{\mathbb{F}}^2(t,T;\mathbb{R}^n) \times \mathcal{C}_{\mathbb{F}}^2(t,T;\mathbb{R}^n) \times \mathcal{L}_{\mathbb{F}}^2(t,T;\mathbb{R}^n) \times \mathcal{L}_{\mathbb{F},p}^2(t,T;\mathbb{R}^n)$ admits a unique solution in view of Theorem \ref{Theorem_1}. $\beta$ is the forward SDE with jump diffusions and random coefficients, and from Assumptions \ref{Assumption_1} and \ref{Assumption_2}, it admits a unique solution in $\mathcal{C}_{\mathbb{F}}^2(t,T;\mathbb{R}^n)$. Moreover, $(\alpha,\eta,\gamma)$ is a linear BSDE with jump diffusions and random coefficients, which admits a unique solution in $\mathcal{C}_{\mathbb{F}}^2(t,T;\mathbb{R}^n) \times \mathcal{L}_{\mathbb{F}}^2(t,T;\mathbb{R}^n) \times \mathcal{L}_{\mathbb{F},p}^2(t,T;\mathbb{R}^n)$ \cite[Lemma 2.4]{Tang_SICON_1994} (see also \cite[Theorem 2.1]{Barles_SSR_1997}).

We now consider the duality relation between $(x^\prime,\phi^\prime)$ and $(\alpha,\beta)$ using It\^o's formula (note that $\widehat{B}_2$, $\widetilde{H}_2$ and $\overline{K}_2$ in (\ref{eq_2_10_1_1_1}) are symmetric):
\begin{align}
\label{eq_3_5}
& \dd \langle x^\prime(s), \alpha(s) \rangle 
 = \Big [ \widehat{A}(s) x^\prime(s-) + \widehat{B}_2(s) 	 \phi^\prime(s-) + \widehat{H}_2(s) \theta^\prime(s)  \\
&~~~~~~~~~~ + \int_{E} \widehat{K}_2(s,e) \psi^\prime(s,e) \lambda (\dd e) + \widehat{B}_1(s) u_1^\prime(s) \Big ]^\top \alpha(s-) \dd s \nonumber \\
&~~~ - x^\prime(s-)^\top \Bigl [ \widehat{A}(s)^\top \alpha(s-) + 	\widehat{C}(s)^\top \eta(s)  + \int_{E} \widehat{F}(s,e)^\top \gamma(s,e) \lambda (\dd e) + Q_1(s) \overline{x}(s-)  \Bigr ]\dd s \nonumber \\
&~~~ + \Big [ \widehat{C}(s) x^\prime(s-) + \widehat{H}_2(s)^\top \phi^\prime(s-) + \widetilde{H}_2(s)\theta^\prime(s) \nonumber \\
&~~~~~~~~~~ + \int_{E} \widetilde{K}_2(s,e)\psi^\prime(s,e)\lambda(\dd e) + \widehat{D}_1(s) u_1^\prime(s)\Big ]^\top \eta(s) \dd s + \int_{E} \Big[ \widehat{F}(s,e)  x^\prime(s-)  \nonumber \\
&~~~~~~~~~~ + \widehat{K}_2(s,e)^\top \phi(s-) + \widetilde{K}_2(s,e)^\top \theta^\prime(s) + \int_{E} \overline{K}_2(s,e,e^\prime) \psi^\prime(s,e^\prime) \lambda (\dd e^\prime) \nonumber \\
&~~~~~~~~~~ + \widehat{G}_1(s,e) u_1^\prime(s) \Bigr ]^\top \gamma(s,e) \lambda(\dd e) \dd s  + [\cdots]\dd B(s) + \int_{E} [\cdots] \widetilde{N}(\dd e, \dd s), \nonumber
\end{align}
and
\begin{align}
\label{eq_3_6}
& \dd \langle \phi^\prime(s), \beta(s) \rangle = - \Bigl [  \widehat{A}(s)^\top \phi^\prime(s-)  + \widehat{C}(s)^\top \theta^\prime(s) + \int_{E} \widehat{F}(s,e)^\top \psi^\prime(s,e) \lambda(\dd e)   \\
&~~~ + \widehat{H}_1(s)^\top u_1^\prime(s) + \int_{E} \widehat{K}_1(s,e)^\top u_1^\prime(s) \lambda( \dd e)  \Bigr ]^\top \beta(s-) \dd s   \nonumber \\
&~~~ + \phi^\prime(s-)^\top \Bigl [ \widehat{A}(s)\beta(s-) + \widehat{B}_2(s) \alpha(s-) + \widehat{H}_2(s) \eta(s) + \int_{E} \widehat{K}_2(s,e) \gamma(s,e) \lambda(\dd e) \Bigr ] \dd s \nonumber \\
&~~~ + \theta^\prime(s)^\top \Bigl [ \widehat{C}(s)\beta(s-) + \widehat{H}_2(s)^\top \alpha(s-) + \widetilde{H}_2(s) \eta^\prime(s) + \int_{E} \widetilde{K}_2(s,e) \gamma(s,e) \lambda(\dd e) \Bigr ] \dd s \nonumber \\
&~~~ + \int_{E} \psi^\prime(s,e)^\top \Bigl [ \widehat{F}(s,e)\beta(s-) + \widehat{K}_2(s,e)^\top \alpha(s-) + \widetilde{K}_2(s,e)^\top \eta(s) \nonumber \\
&~~~~~~~~~~ + \int_{E} \overline{K}_2(s,e,e^\prime) \gamma(s,e^\prime) \lambda (\dd e^\prime) \Bigr ] \lambda(\dd e) \dd s   + [\cdots] \dd B(s)  + \int_{E} [\cdots] \widetilde{N}(\dd e, \dd s). \nonumber
\end{align}
Using (\ref{eq_3_5}) and (\ref{eq_3_6}), we have
\begin{align}
\label{eq_3_7}
& \mathbb{E} [\langle x^\prime(T), M_1 \overline{x}(T) \rangle] \\
& = 	\mathbb{E} \bigl [ \langle x^\prime(T), \alpha(T)\rangle -  \langle x^\prime(t),\alpha(t) \rangle - \langle \phi^\prime(T),\beta(T) \rangle +  \langle \phi^\prime(t),\beta(t) \rangle \bigr ] \nonumber \\
& = \mathbb{E} \Bigl [ \int_t^T \bigl [ - \langle x^\prime(s), Q_1(s) \overline{x}(s) \rangle + \Bigl \langle u_1^\prime(s), \widehat{B}_1(s)^\top \alpha(s-) + \widehat{D}_1(s)^\top \eta(s) \nonumber \\
&~~~ + \int_{E} \widehat{G}_1(s)^\top \gamma(s,e) \lambda (\dd e) + \widehat{H}_1(s) \beta(s-) + \int_{E} \widehat{K}_1(s,e) \beta(s-) \lambda (\dd e) \Bigr \rangle \bigr ] \dd s \Bigr ]. \nonumber
\end{align}
Similarly, we have
\begin{align}
\label{eq_3_8}
J_1(a;\overline{u}_1,\overline{u}_2) & = \mathbb{E} \Bigl [ \langle \overline{x}(t),\alpha(t) \rangle + 	\int_t^T \langle \overline{u}_1(s), R_1(s) \overline{u}_1(s) \rangle \\
&~~~ + \Bigl \langle \overline{u}_1(s), \widehat{B}_1(s)^\top \alpha(s-) + \widehat{D}_1(s)^\top \eta(s) + \int_{E} \widehat{G}_1(s)^\top \gamma(s,e) \lambda (\dd e) \nonumber \\
&~~~  + \widehat{H}_1(s) \beta(s-) + \int_{E} \widehat{K}_1(s,e) \beta(s-) \lambda (\dd e) \Bigr \rangle \dd s \Bigr ], \nonumber
\end{align}
and
\begin{align}
\label{eq_3_9}
& J_1(0;u_1^\prime,\overline{u}_2) \\
& =\mathbb{E} \Bigl [ \int_t^T \bigl [ |x^\prime(s)|^2_{Q_1(s)} + |u_1^\prime(s)|^2_{R_1(s)} \bigr ] \dd s + |x^\prime(T)|^2_{M_1} \Bigr ] \nonumber \\
& =  \mathbb{E} \Bigl [  \int_t^T \Bigl \langle u_1^\prime(s), R_1(s) u_1^\prime(s) + \widehat{B}_1(s)^\top \alpha^\prime(s-)  + \widehat{D}_1(s)^\top \eta^\prime(s)  \nonumber  \\
&~~~ + \int_{E} \widehat{G}_1(s)^\top \gamma^\prime(s,e) \lambda (\dd e)  + \widehat{H}_1(s) \beta^\prime(s-)  + \int_{E} \widehat{K}_1(s,e) \beta^\prime(s-) \lambda (\dd e) \Bigr \rangle \dd s \Bigr ].  \nonumber
\end{align}

Then from (\ref{eq_3_7})-(\ref{eq_3_9}), for $\kappa \in \mathbb{R}$, 
\begin{align*}
& J(a;\overline{u}_1 + \kappa u_1^\prime, \overline{u}_2) - 	J(a;\overline{u}_1, \overline{u}_2) \\
& = 2 \kappa \mathbb{E} \Bigl [ \int_t^T \bigl [ \langle x^\prime(s), Q_1(s) \overline{x}(s) \rangle + \langle u_1^\prime(s), R_1(s) \overline{u}_1(s) \rangle \bigr ] \dd s + \langle x^\prime(T), M_1 \overline{x}(T) \rangle \Bigr ] \\
&~~~ + \kappa^2 J_1(0,u_1^\prime,\overline{u}_2) \\
& = 2 \kappa \mathbb{E} \Biggl [ \int_t^T  \Bigl \langle u_1^\prime(s), R_1(s) \overline{u}_1(s) + \widehat{B}_1(s)^\top \alpha(s-) + \widehat{D}_1(s)^\top \eta(s) \\
&~~~~~~~ + \int_{E} \widehat{G}_1(s)^\top \gamma(s,e) \lambda (\dd e) + \widehat{H}_1(s) \beta(s-) + \int_{E} \widehat{K}_1(s,e) \beta(s-) \lambda (\dd e) \Bigr \rangle  \dd s \Biggr ] \\
&~~~ + \kappa^2 \mathbb{E} \Biggl [  \int_t^T \Bigl \langle u_1^\prime(s), R_1(s) u_1^\prime(s) + \widehat{B}_1(s)^\top \alpha^\prime(s-)  + \widehat{D}_1(s)^\top \eta^\prime(s)  \\
&~~~~~~~ + \int_{E} \widehat{G}_1(s)^\top \gamma^\prime(s,e) \lambda (\dd e)   + \widehat{H}_1(s) \beta^\prime(s-) + \int_{E} \widehat{K}_1(s,e) \beta^\prime(s-) \lambda (\dd e) \Bigr \rangle \dd s \Biggr ] \geq 0,
\end{align*}
which implies that under (\ref{eq_3_3}), $\overline{u}_1 \in \mathcal{U}_1$ is the optimal control for \textbf{(LQ-L)} if and only if  the first-order optimality condition in (\ref{eq_3_4}) holds. This completes the proof.
\end{proof}

\begin{remark}\label{Remark_2}
{\color{black}Note that (\ref{eq_3_3}) is needed, since \textbf{(LQ-L)} is the indefinite LQ problem.} From (\ref{eq_3_9}), we can see that (\ref{eq_3_3}) holds when $Q_1$, $R_1$ and $M_1$ are uniformly positive (semi)definite for a.e. $(\omega,s) \in \Omega \times [0,T]$.
\end{remark}

Below, we obtain the state-feedback representation of (\ref{eq_3_4}) for two different cases. Note that (\ref{eq_3_4}) depends on the coupled FBSDEs in (\ref{eq_3_2}).

Let us define the augmented (forward and backward) state processes
\begin{align}
\label{eq_3_10_1_1_1_1_1_1}
\mathcal{X}(s) := \begin{bmatrix}
 	\overline{x}(s) \\
 	\beta(s)
 \end{bmatrix},~ \mathcal{Y}(s) := \begin{bmatrix}
 \alpha(s) \\
 \phi(s)	
 \end{bmatrix},~ \mathcal{Z}(s) := \begin{bmatrix}
	\eta(s) \\
 	\theta(s)
 \end{bmatrix},~ \mathcal{K}(s,e) := \begin{bmatrix}
 	\gamma(s,e) \\
 	\psi(s,e)
 \end{bmatrix},
\end{align}
where $\overline{\mathcal{X}} := \mathcal{X}(t) = \begin{bmatrix}
 a \\ 0	
 \end{bmatrix}$ and we define (see (\ref{eq_2_9_1}), (\ref{eq_2_10_1_1_1}) and (\ref{eq_2_13_1_1_1}))
\begin{align}
\label{eq_3_10_1_1_1_1_1}
\begin{cases}
\mathbb{A}(s) := \begin{bmatrix}
 	\widehat{A}(s) & 0 \\
 	0 & \widehat{A}(s)
 \end{bmatrix},~ \mathbb{B}_2(s) := \begin{bmatrix}
 	0 & \widehat{B}_2(s) \\
 	\widehat{B}_2(s) & 0
 \end{bmatrix},~ \widehat{\mathbb{H}}(s) := \begin{bmatrix}
 	0 & \widehat{H}_2(s) \\
 	\widehat{H}_2(s) & 0
 \end{bmatrix} \\
\widehat{\mathbb{K}}(s,e) := \begin{bmatrix}
 	0 & \widehat{K}_2(s,e) \\
 	\widehat{K}_2(s,e) & 0
 \end{bmatrix},~ \mathbb{B}_1(s) := \begin{bmatrix}
 	\widehat{B}_1(s) \\
 	0
 \end{bmatrix},~ \mathbb{C}(s) :=  \begin{bmatrix}
 \widehat{C}(s) & 0 \\
 0 & \widehat{C}(s)	
 \end{bmatrix} \\
 \widetilde{\mathbb{H}}(s) := \begin{bmatrix}
 	0 & \widetilde{H}_2(s) \\
 	\widetilde{H}_2(s) & 0
 \end{bmatrix},~ \widetilde{\mathbb{K}}(s,e) := \begin{bmatrix}
	0 & \widetilde{K}_2(s,e) \\
	\widetilde{K}_2(s,e) & 0 
\end{bmatrix} \\
\mathbb{D}_1(s) := \begin{bmatrix}
 	\widehat{D}_1(s) \\
 	0
 \end{bmatrix},~ \mathbb{F}(s,e)  := \begin{bmatrix}
 \widehat{F}(s,e) & 0 \\
 0 & \widehat{F}(s,e)	
 \end{bmatrix},~ \mathbb{Q}(s) := \begin{bmatrix}
 	Q_1(s) & 0 \\
 	0 & 0
 \end{bmatrix} \\
 \overline{\mathbb{K}}(s,e,e^\prime) := \begin{bmatrix}
 0 & \overline{K}_2(s,e,e^\prime)	\\
 \overline{K}_2(s,e,e^\prime) & 0
 \end{bmatrix},~ \mathbb{G}_1(s,e) := \begin{bmatrix}
 \widehat{G}_1(s,e) \\
 0	
 \end{bmatrix} \\
 \mathbb{H}_1(s) := \begin{bmatrix}
 	0 & \widehat{H}_1(s)
 \end{bmatrix}, ~ \mathbb{K}_1(s,e) := \begin{bmatrix}
 	0 & \widehat{K}_1(s,e)
 \end{bmatrix},~ \mathbb{M}_1 := \begin{bmatrix}
 	M_1 & 0 \\
 	0 & 0
 \end{bmatrix}.
\end{cases}
\end{align}

Then the (augmented) coupled FBSDEs in (\ref{eq_3_2}) can be rewritten as follows:
\begin{align}
\label{eq_3_10}
\begin{cases}
\dd \mathcal{X}(s) = \Bigl [ \mathbb{A}(s)\mathcal{X}(s-) + \mathbb{B}_2(s) \mathcal{Y}(s-) + \widehat{\mathbb{H}}(s)\mathcal{Z}(s) \\
~~~~~~~~~~ + \int_{E} \widehat{\mathbb{K}}(s,e) \mathcal{K}(s,e) \lambda (\dd e) + \mathbb{B}_1(s) \overline{u}_1(s) \Bigr ]\dd s \\
~~~ + \Bigl [ \mathbb{C}(s)\mathcal{X}(s-) + \widehat{\mathbb{H}}(s)^\top \mathcal{Y}(s-) + \widetilde{\mathbb{H}}(s) \mathcal{Z}(s) \\
~~~~~~~~~~ + \int_{E} \widetilde{\mathbb{K}}(s,e) \mathcal{K}(s,e) \lambda (\dd e) + \mathbb{D}_1(s) \overline{u}_1(s) \Bigr ] \dd B(s) \\
~~~ + \int_{E} \Bigl [ \mathbb{F}(s,e) \mathcal{X}(s-) + \widehat{\mathbb{K}}(s,e)^\top \mathcal{Y}(s-) + \widetilde{\mathbb{K}}(s,e)^\top \mathcal{Z}(s) \\
~~~~~~~~~~ + \int_{E} \overline{\mathbb{K}}(s,e,e^\prime) \mathcal{K}(s,e^\prime) \lambda (\dd e^\prime) + \mathbb{G}_1(s,e) \overline{u}_1(s) \Bigr ]  \widetilde{N}(\dd e, \dd s),~ s \in (t,T] \\
\dd \mathcal{Y}(s) = - \Bigl [ \mathbb{A}(s)^\top \mathcal{Y}(s-) + \mathbb{Q}(s) \mathcal{X}(s-) +   \mathbb{C}(s)^\top \mathcal{Z}(s) \\
~~~~~~~~~~ + \int_{E} \mathbb{F}(s,e)^\top \mathcal{K}(s,e) \lambda (\dd e) + \mathbb{H}_1(s)^\top \overline{u}_1(s)  \\
~~~~~~~~~~ + \int_{E} \mathbb{K}_1(s,e)^\top \overline{u}_1(s) \lambda (\dd e) \Bigr ] \dd s \\
~~~ + \mathcal{Z}(s) \dd B(s) + \int_{E} \mathcal{K}(s,e) \widetilde{N}(\dd e, \dd s),~ s \in [t,T) \\
\mathcal{X}(t) = \overline{\mathcal{X}},~ \mathcal{Y}(T) = \mathbb{M}_1 \mathcal{X}(T),
\end{cases}
\end{align}
where the optimality condition in (\ref{eq_3_4}) becomes
\begin{align}
\label{eq_3_11}
& R_1(s) \overline{u}_1(s) + \mathbb{B}_1(s)^\top \mathcal{Y}(s-) + \mathbb{D}_1(s)^\top \mathcal{Z}(s) + \int_{E} \mathbb{G}_1(s,e)^\top \mathcal{K}(s,e) \lambda (\dd e) \\
&~~~ + \mathbb{H}_1(s) \mathcal{X}(s-) + \int_{E} \mathbb{K}_1(s,e) \mathcal{X}(s-) \lambda (\dd e) = 0. \nonumber
\end{align}

We consider the following transformation in the \emph{Four-Step Scheme}:
\begin{align}
\label{eq_3_12}
\mathcal{Y}(s) = \mathcal{P}(s) \mathcal{X}(s),	
\end{align}
where $\mathcal{P}$ takes the following form:
\begin{align}	
\label{eq_3_13}
\begin{cases}
\dd \mathcal{P}(s) = \Lambda_3(s) \dd s + \Psi(s) \dd B(s) + \int_{E} \Theta(s,e) \widetilde{N}(\dd e, \dd s),~ s \in [t,T) \\
\mathcal{P}(T) = \mathbb{M}_1.
\end{cases}
\end{align}
Note that $\mathcal{P}$, $\Psi$ and $\Theta$ are $\mathbb{R}^{2n \times 2n}$-valued processes. Let ($s$ is suppressed)
\begin{align*}
\mathcal{P} = \begin{bmatrix}
	\mathcal{P}_{11} & \mathcal{P}_{12} \\
	\mathcal{P}_{21} & \mathcal{P}_{22}
\end{bmatrix},~ \text{$\mathcal{P}_{11}$ is an $\mathbb{R}^{n \times n}$-dimensional process.}
\end{align*}

By applying It\^o's formula to (\ref{eq_3_12}) and using (\ref{eq_3_13}), we have
\begin{align}
\label{eq_3_14}
\dd \mathcal{Y}(s) & = - \Bigl [ \mathbb{A}(s)^\top \mathcal{P}(s-) \mathcal{X}(s-) + \mathbb{Q}(s) \mathcal{X}(s-) +   \mathbb{C}(s)^\top \mathcal{Z}(s)  \\
& ~~~~~~~~~~ + \mathbb{H}_1(s)^\top \overline{u}_1(s) + \int_{E} \mathbb{F}(s,e)^\top \mathcal{K}(s,e)    \lambda (\dd e)  \nonumber \\
& ~~~~~~~~~~ + \int_{E} \mathbb{K}_1(s,e)^\top \overline{u}_1(s) \lambda (\dd e) \Bigr ] \dd s \nonumber \\
& ~~~  + \mathcal{Z}(s) \dd B(s) + \int_{E} \mathcal{K}(s,e) \widetilde{N}(\dd e, \dd s) \nonumber \\
& = \Bigl [ \Lambda_3(s) \dd s + \Psi(s) \dd B(s) + \int_{E} \Theta(s,e) \widetilde{N}(\dd e, \dd s) \Bigr ] \mathcal{X}(s-) \nonumber \\
&~~~ + \mathcal{P}(s-) \Bigl [ \mathbb{A}(s)\mathcal{X}(s-) + \mathbb{B}_2(s) \mathcal{P}(s-) \mathcal{X}(s-)  + \widehat{\mathbb{H}}(s)\mathcal{Z}(s) \nonumber \\
& ~~~~~~~~~~ + \int_{E} \widehat{\mathbb{K}}(s,e) \mathcal{K}(s,e) \lambda (\dd e) + \mathbb{B}_1(s) \overline{u}_1(s) \Bigr ]\dd s 	\nonumber \\
&~~~ + \mathcal{P}(s-) \Bigl [ \mathbb{C}(s)\mathcal{X}(s-) + \widehat{\mathbb{H}}(s)^\top\mathcal{P}(s-) \mathcal{X}(s-)  + \widetilde{\mathbb{H}}(s) \mathcal{Z}(s) \nonumber \\
& ~~~~~~~~~~ + \int_{E} \widetilde{\mathbb{K}}(s,e) \mathcal{K}(s,e) \lambda (\dd e) + \mathbb{D}_1(s) \overline{u}_1(s) \Bigr ] \dd B(s) \nonumber \\
&~~~ + \Psi(s)^\top \Bigl [ \mathbb{C}(s)\mathcal{X}(s-) + \widehat{\mathbb{H}}(s)^\top \mathcal{P}(s-) \mathcal{X}(s-)  + \widetilde{\mathbb{H}}(s) \mathcal{Z}(s) \nonumber \\
& ~~~~~~~~~~ + \int_{E} \widetilde{\mathbb{K}}(s,e) \mathcal{K}(s,e) \lambda (\dd e) + \mathbb{D}_1(s) \overline{u}_1(s) \Bigr ] \dd s \nonumber \\
&~~~ + \int_{E} \Theta(s,e)^\top  \Bigl [ \mathbb{F}(s,e) \mathcal{X}(s-) + \widehat{\mathbb{K}}(s,e)^\top \mathcal{P}(s-) \mathcal{X}(s-)  \nonumber \\
& ~~~~~~~~~~ + \widetilde{\mathbb{K}}(s,e)^\top \mathcal{Z}(s) + \int_{E} \overline{\mathbb{K}}(s,e,e^\prime) \mathcal{K}(s,e^\prime) \lambda (\dd e^\prime) \nonumber \\
& ~~~~~~~~~~  + \mathbb{G}_1(s,e) \overline{u}_1(s) \Bigr ] \lambda (\dd e) \dd s \nonumber \\
&~~~ + \int_{E} (\mathcal{P}(s-) + \Theta(s,e))  \Bigl [ \mathbb{F}(s,e) \mathcal{X}(s-) + \widehat{\mathbb{K}}(s,e)^\top \mathcal{P}(s-) \mathcal{X}(s-)  \nonumber \\
& ~~~~~~~~~~  + \widetilde{\mathbb{K}}(s,e)^\top \mathcal{Z}(s)  \nonumber \\
& ~~~~~~~~~~ + \int_{E} \overline{\mathbb{K}}(s,e,e^\prime) \mathcal{K}(s,e^\prime) \lambda (\dd e^\prime) + \mathbb{G}_1(s,e) \overline{u}_1(s) \Bigr ] \widetilde{N}(\dd e, \dd s). \nonumber
\end{align}

To obtain the state-feedback representation of (\ref{eq_3_11}), we consider the following two different cases:
\begin{enumerate}[(i)]
	\item The Poisson process $N$ has jumps of unit size ($E=\{1\}$);
	\item The follower's control is not included in the jump part of (\ref{eq_1}) ($G_2 = 0$).
\end{enumerate}
\begin{remark}
A detailed discussion on these two assumptions is given in Remark \ref{Remark_7}.	
\end{remark}

\subsection{Case I: $N$ has jumps of unit size}\label{Section_3_1}
Let us assume that
\begin{assumption}\label{Assumption_3}
The Poisson process $N$ has jumps of unit size, i.e., $E = \{ 1 \}$. 
\end{assumption}
We continue the Four-Step Scheme under Assumption \ref{Assumption_3}.

Under Assumption \ref{Assumption_3} and from Remark \ref{Remark_1}, (\ref{eq_3_14}) is given by\footnote{Note that under Assumption \ref{Assumption_3}, $\int_{E} g(s,e) \lambda (\dd e) \dd s = g(s) \lambda \dd s$ for $g \in \mathcal{G}_{\mathbb{F},p}^2(t,T,\lambda;\mathbb{R}^n)$, where $\lambda > 0$ is the intensity of $N$ \cite{Applebaum_book, Privault_book}.}

\begin{align}
\label{eq_3_15}
\dd \mathcal{Y}(s) & = - \Bigl [ \mathbb{A}(s)^\top \mathcal{P}(s) \mathcal{X}(s-) + \mathbb{Q}(s) \mathcal{X}(s-) +   \mathbb{C}(s)^\top \mathcal{Z}(s)  + \mathbb{H}_1(s)^\top \overline{u}_1(s) \\
& ~~~~~~~~~~ + \lambda \mathbb{F}(s)^\top \mathcal{K}(s) + \lambda \mathbb{K}_1(s)^\top \overline{u}_1(s)  \Bigr ] \dd s  + \mathcal{Z}(s) \dd B(s) + \mathcal{K}(s) \dd \widetilde{N}(s) \nonumber \\
& = \Bigl [ \Lambda_3(s) \dd s + \Psi(s) \dd B(s) +  \Theta(s) \dd \widetilde{N}(s) \Bigr ] \mathcal{X}(s-) \nonumber \\
&~~~ + \mathcal{P}(s-) \Bigl [ \mathbb{A}(s)\mathcal{X}(s-) + \mathbb{B}_2(s) \mathcal{P}(s-) \mathcal{X}(s-)  + \widehat{\mathbb{H}}(s)\mathcal{Z}(s) \nonumber \\
& ~~~~~~~~~~ + \lambda \widehat{\mathbb{K}}(s) \mathcal{K}(s) + \mathbb{B}_1(s) \overline{u}_1(s) \Bigr ]\dd s 	\nonumber \\
&~~~ + \mathcal{P}(s-) \Bigl [ \mathbb{C}(s)\mathcal{X}(s-) + \widehat{\mathbb{H}}(s)^\top\mathcal{P}(s-) \mathcal{X}(s-)  + \widetilde{\mathbb{H}}(s) \mathcal{Z}(s) \nonumber \\
& ~~~~~~~~~~ +\lambda \widetilde{\mathbb{K}}(s) \mathcal{K}(s) + \mathbb{D}_1(s) \overline{u}_1(s) \Bigr ] \dd B(s) \nonumber \\
&~~~ + \Psi(s)^\top \Bigl [ \mathbb{C}(s)\mathcal{X}(s-) + \widehat{\mathbb{H}}(s)^\top \mathcal{P}(s-) \mathcal{X}(s-)  + \widetilde{\mathbb{H}}(s) \mathcal{Z}(s) \nonumber \\
& ~~~~~~~~~~ + \lambda \widetilde{\mathbb{K}}(s) \mathcal{K}(s) + \mathbb{D}_1(s) \overline{u}_1(s) \Bigr ] \dd s \nonumber \\
&~~~ + \Theta(s)^\top  \Bigl [ \mathbb{F}(s) \mathcal{X}(s-) + \widehat{\mathbb{K}}(s)^\top \mathcal{P}(s-) \mathcal{X}(s-)  + \widetilde{\mathbb{K}}(s)^\top \mathcal{Z}(s) \nonumber \\
& ~~~~~~~~~~ + \lambda \overline{\mathbb{K}}(s) \mathcal{K}(s)  + \mathbb{G}_1(s) \overline{u}_1(s) \Bigr ] \lambda \dd s \nonumber \\
&~~~ + (\mathcal{P}(s-) + \Theta(s))  \Bigl [ \mathbb{F}(s) \mathcal{X}(s-) + \widehat{\mathbb{K}}(s)^\top \mathcal{P}(s-) \mathcal{X}(s-)  + \widetilde{\mathbb{K}}(s)^\top \mathcal{Z}(s) \nonumber \\
& ~~~~~~~~~~ + \lambda \overline{\mathbb{K}}(s) \mathcal{K}(s)  + \mathbb{G}_1(s) \overline{u}_1(s) \Bigr ] \dd \widetilde{N}(s). \nonumber
\end{align}

Let us define ($s$ is suppressed)
\begin{align}
\label{eq_3_17_1_1_1_1_1}
\begin{cases}
\mathscr{A}_{11} :=  I - \mathcal{P}(s-) \widetilde{\mathbb{H}},~ \mathscr{A}_{12} := - \lambda \mathcal{P}(s-) \widetilde{\mathbb{K}}\\
\mathscr{A}_{21} := - (\mathcal{P}(s-) + \Theta) \widetilde{\mathbb{K}}^\top,~ \mathscr{A}_{22} := I - \lambda (\mathcal{P}(s-) + \Theta)\overline{\mathbb{K}} \\
\mathscr{B}_{11} := \mathcal{P}(s-) \mathbb{C} + \mathcal{P}(s-) \widehat{\mathbb{H}}^\top \mathcal{P}(s-) + \Psi,~ \mathscr{B}_{12} := \mathcal{P}(s-) \mathbb{D}_1 \\
\mathscr{B}_{21} := (\mathcal{P}(s-) + \Theta)(\mathbb{F} + \widehat{\mathbb{K}}^\top \mathcal{P}(s-)) + \Theta,~ \mathscr{B}_{22} := (\mathcal{P}(s-) + \Theta) \mathbb{G}_1.
\end{cases}
\end{align}
Then from (\ref{eq_3_15}), we can see that
\begin{align}
\label{eq_3_16_1}
& \mathscr{A} \begin{bmatrix}
	\mathcal{Z} \\
	\mathcal{K}
\end{bmatrix} = \begin{bmatrix}
	\mathscr{A}_{11} & \mathscr{A}_{12} \\
	\mathscr{A}_{21} & \mathscr{A}_{22}
\end{bmatrix}
\begin{bmatrix}
	\mathcal{Z} \\
	\mathcal{K}
\end{bmatrix}  = \begin{bmatrix}
\mathscr{B}_{11} \mathcal{X}(s-) + \mathscr{B}_{12} \overline{u}_1 \\
\mathscr{B}_{21} \mathcal{X}(s-) + \mathscr{B}_{22} \overline{u}_1 \\
 \end{bmatrix},
\end{align}
which, together with the block matrix inversion lemma  \cite[page 18]{Horn_book_2013} (assuming its invertibility), implies
\begin{align}
\label{eq_3_16}	
\begin{bmatrix}
	\mathcal{Z} \\
	\mathcal{K}
\end{bmatrix} = \mathscr{A}^{-1} \begin{bmatrix}
\mathscr{B}_{11} \mathcal{X}(s-) + \mathscr{B}_{12} \overline{u}_1 \\
\mathscr{B}_{21} \mathcal{X}(s-) + \mathscr{B}_{22} \overline{u}_1 \\
 \end{bmatrix} = \begin{bmatrix}
 	\widehat{\mathscr{A}}_{11} & \widehat{\mathscr{A}}_{12} \\
 	\widehat{\mathscr{A}}_{21} & \widehat{\mathscr{A}}_{22}
 \end{bmatrix} \begin{bmatrix}
\mathscr{B}_{11} \mathcal{X}(s-) + \mathscr{B}_{12} \overline{u}_1 \\
\mathscr{B}_{21} \mathcal{X}(s-) + \mathscr{B}_{22} \overline{u}_1 \\
 \end{bmatrix},
\end{align}
where
\begin{align}
\label{eq_3_20_1_1_1_1}
\begin{cases}	
\widehat{\mathscr{A}}_{11}  := (\mathscr{A}_{11} - \mathscr{A}_{12} \mathscr{A}_{22}^{-1} \mathscr{A}_{21})^{-1},~ \widehat{\mathscr{A}}_{12} := \mathscr{A}_{11}^{-1} \mathscr{A}_{12} (\mathscr{A}_{21} \mathscr{A}_{11}^{-1} \mathscr{A}_{12} - \mathscr{A}_{22})^{-1} \\
\widehat{\mathscr{A}}_{21} := \mathscr{A}_{22}^{-1}\mathscr{A}_{21} (\mathscr{A}_{12}\mathscr{A}_{22}^{-1} \mathscr{A}_{21} - \mathscr{A}_{11})^{-1}, ~\widehat{\mathscr{A}}_{22} := (\mathscr{A}_{22} - \mathscr{A}_{21} \mathscr{A}_{11}^{-1} \mathscr{A}_{12})^{-1}.
\end{cases}
\end{align}

Substituting (\ref{eq_3_16}) and (\ref{eq_3_12}) into the optimality condition in (\ref{eq_3_11}) yields\footnote{Under Assumption \ref{Assumption_3}, (\ref{eq_3_11}) becomes $ R_1(s) \overline{u}_1(s) + \mathbb{B}_1(s)^\top \mathcal{Y}(s-) + \mathbb{D}_1(s)^\top \mathcal{Z}(s) + \lambda \mathbb{G}_1(s)^\top \mathcal{K}(s) + \mathbb{H}_1(s) \mathcal{X}(s-) + \lambda \mathbb{K}_1(s) \mathcal{X}(s-) = 0$.}
\begin{align*}
& R_1 \overline{u}_1(s) + \mathbb{B}_1^\top \mathcal{P}(s-) \mathcal{X}(s-) + \mathbb{H}_1 \mathcal{X}(s-) + \lambda \mathbb{K}_1 \mathcal{X}(s-)  \\
&~~~ + \bigl ( \mathbb{D}_1^\top (\widehat{\mathscr{A}}_{11} \mathscr{B}_{11} + \widehat{\mathscr{A}}_{12} \mathscr{B}_{21}) + \lambda \mathbb{G}_1^\top (\widehat{\mathscr{A}}_{21} \mathscr{B}_{11} + \widehat{\mathscr{A}}_{22} \mathscr{B}_{21}) \bigr ) \mathcal{X}(s-) \\
&~~~ + \bigl ( \mathbb{D}_1^\top (\widehat{\mathscr{A}}_{11} \mathscr{B}_{12} + \widehat{\mathscr{A}}_{12} \mathscr{B}_{22}) + \lambda \mathbb{G}_1^\top (\widehat{\mathscr{A}}_{21} \mathscr{B}_{12} + \widehat{\mathscr{A}}_{22} \mathscr{B}_{22}) \bigr ) \overline{u}_1(s) = 0,
\end{align*}
and we have
\begin{align}
\label{eq_3_17}
\overline{u}_1(s) 
& = - \mathcal{R}_1(s)^{-1} \mathcal{H}_1(s) \mathcal{X}(s-),
\end{align}
provided that $\mathcal{R}_1$ is invertible, where ($s$ is suppressed)
\begin{align}
\label{eq_3_19}
\begin{cases}
\mathcal{R}_1 := R_1 + 	\bigl ( \mathbb{D}_1^\top (\widehat{\mathscr{A}}_{11} \mathscr{B}_{12} + \widehat{\mathscr{A}}_{12} \mathscr{B}_{22}) + \lambda \mathbb{G}_1^\top (\widehat{\mathscr{A}}_{21} \mathscr{B}_{12} + \widehat{\mathscr{A}}_{22} \mathscr{B}_{22}) \bigr ) \\
\mathcal{H}_1 := \mathbb{B}_1^\top \mathcal{P}(s-) + \mathbb{H}_1 + \lambda \mathbb{K}_1 \\
~~~ + \bigl ( \mathbb{D}_1^\top (\widehat{\mathscr{A}}_{11} \mathscr{B}_{11} + \widehat{\mathscr{A}}_{12} \mathscr{B}_{21}) + \lambda \mathbb{G}_1^\top (\widehat{\mathscr{A}}_{21} \mathscr{B}_{11} + \widehat{\mathscr{A}}_{22} \mathscr{B}_{21}) \bigr ).
\end{cases}
\end{align}

By substituting (\ref{eq_3_17}) into (\ref{eq_3_16}), we have ($s$ is suppressed)
\begin{align}
\label{eq_3_20}
\begin{bmatrix}
\mathcal{Z}(s) \\
\mathcal{K}(s)
\end{bmatrix} = 	\begin{bmatrix}
 		\mathscr{F}_{11} - \mathscr{F}_{12} \mathcal{R}_1^{-1} \mathcal{H}_1\\
 		\mathscr{F}_{21} - \mathscr{F}_{22} \mathcal{R}_1^{-1} \mathcal{H}_1
 	\end{bmatrix}\mathcal{X}(s-),
\end{align}
where
\begin{align}
\label{eq_3_24_1_1_1_1}
\begin{cases}
\mathscr{F}_{11} :=	 \widehat{\mathscr{A}}_{11} \mathscr{B}_{11} + \widehat{\mathscr{A}}_{12} \mathscr{B}_{21},~ \mathscr{F}_{12} :=	 \widehat{\mathscr{A}}_{11} \mathscr{B}_{12} + \widehat{\mathscr{A}}_{12} \mathscr{B}_{22} \\
\mathscr{F}_{21} :=	 \widehat{\mathscr{A}}_{21} \mathscr{B}_{11} + \widehat{\mathscr{A}}_{22} \mathscr{B}_{21},~ \mathscr{F}_{22} :=	 \widehat{\mathscr{A}}_{21} \mathscr{B}_{12} + \widehat{\mathscr{A}}_{22} \mathscr{B}_{22}.
\end{cases}
\end{align}

We substitute (\ref{eq_3_20}) and (\ref{eq_3_17}) into (\ref{eq_3_15}). Then combining (\ref{eq_3_13}) with the above invertibility conditions (see (\ref{eq_3_16}) and (\ref{eq_3_17})) and using the notation in (\ref{eq_3_10_1_1_1_1_1}), (\ref{eq_3_17_1_1_1_1_1}), (\ref{eq_3_20_1_1_1_1}) and (\ref{eq_3_24_1_1_1_1}), the ISRDE in (\ref{eq_3_13}) can be written as
\begin{align}
\label{eq_3_21}
\begin{cases}
\dd \mathcal{P}(s) = - \Bigl [ \mathbb{A}^\top \mathcal{P}(s-) + \mathcal{P}(s-) \mathbb{A} + \mathbb{Q} + \mathcal{P}(s-) \mathbb{B}_2 \mathcal{P}(s-) \\
~~~~~~~~~~~~~ + \Psi^\top \mathbb{C} + \Psi^\top \widehat{\mathbb{H}}^\top \mathcal{P}(s-) + \lambda \Theta^\top \mathbb{F}(s) + \lambda \Theta^\top \widehat{\mathbb{K}}^\top \mathcal{P}(s-) \\
~~~~~~~~~~~~~ + ( \mathbb{C}^\top + \mathcal{P}(s-) \widehat{\mathbb{H}} + \Psi^\top \widetilde{\mathbb{H}} + \lambda \Theta^\top \widetilde{\mathbb{K}}^\top) (\mathscr{F}_{11} - \mathscr{F}_{12} \mathcal{R}_1^{-1} \mathcal{H}_1) \\
~~~~~~~~~~~~~ + (\lambda \mathbb{F}^\top + \lambda \mathcal{P}(s-) \widehat{\mathbb{K}} + \lambda \Psi^\top \widetilde{\mathbb{K}} + \lambda^2 \Theta^\top \overline{\mathbb{K}})(\mathscr{F}_{21} - \mathscr{F}_{22}\mathcal{R}_1^{-1} \mathcal{H}_1)
\\
~~~~~~~~~~~~~ - ( \mathbb{H}_1^\top + \lambda \mathbb{K}_1^\top + \mathcal{P}(s-) \mathbb{B}_1 + \Psi^\top \mathbb{D}_1 + \lambda \Theta^\top \mathbb{G}_1) \mathcal{R}_1^{-1} \mathcal{H}_1 \Bigr ] \dd s \\
~~~~~~~~~ + \Psi(s) \dd B(s) + \Theta(s) \dd \widetilde{N}(s),~ s \in [t,T)\\
\mathcal{P}(T) = \mathbb{M}_1 \\
\det (\mathscr{A}_{11}(s)  ) \neq 0,~ \det (\mathscr{A}_{22}(s)  ) \neq 0,~ \forall s \in [t,T] \\
\det (\mathscr{A}_{11}(s) - \mathscr{A}_{12}(s) \mathscr{A}_{22}(s)^{-1} \mathscr{A}_{21}(s)) \neq 0,~ \forall s \in [t,T] \\
\det (\mathscr{A}_{22}(s) - \mathscr{A}_{21}(s) \mathscr{A}_{11}(s)^{-1} \mathscr{A}_{12}(s)) \neq 0,~ \forall s \in [t,T] \\
\det (\mathcal{R}_1(s)) \neq 0,~ \forall s \in [t,T].
\end{cases}
\end{align}
\begin{remark}\label{Remark_6_1_1_1_1_1_1}
	{\color{black} Unlike \cite{Jiongmin_Yong_1}, note that there are several invertibility conditions of the block matrices to get the explicit expression of the ISRDE in (\ref{eq_3_21}) and the associated (state-feedback type) optimal solution of \textbf{(LQ-L)} in (\ref{eq_3_17}). For the problem without jumps, (\ref{eq_3_16_1}) is simplified to $\mathscr{A}_{12} = \mathscr{A}_{21} = \mathscr{A}_{22} = 0$ as in \cite{Jiongmin_Yong_1}, in which case the ISRDE in (\ref{eq_3_21}) is reduced to the SRDE in \cite[(3.38)]{Jiongmin_Yong_1}.	Hence, the Four-Step Scheme of \textbf{(LQ-L)} under Assumption \ref{Assumption_3} extends \cite[Section 3]{Jiongmin_Yong_1} to the case of jump-diffusion systems.}
\end{remark}

Finally, we substitute (\ref{eq_3_17}), (\ref{eq_3_12}) and  (\ref{eq_3_20}) into $\mathcal{X}$ in (\ref{eq_3_10}). Then
\begin{align}
\label{eq_3_22}
\begin{cases}
\dd \mathcal{X}(s) = \widehat{\mathbb{A}}(s) \mathcal{X}(s-) \dd s + \widehat{\mathbb{C}}(s) \mathcal{X}(s-) \dd B (s) + \widehat{\mathbb{F}}(s) \mathcal{X}(s-) \dd \widetilde{N}(s),~ s \in (t,T] \\
\mathcal{X}(t) = \overline{\mathcal{X}},
\end{cases}	
\end{align}
where ($s$ is suppressed)
\begin{align*}
\begin{cases}
\widehat{\mathbb{A}}	  := \mathbb{A} + \mathbb{B}_2 \mathcal{P}(s-) + \widehat{\mathbb{H}}(\mathscr{F}_{11} - \mathscr{F}_{12} \mathcal{R}_1^{-1} \mathcal{H}_1) \\
~~~~~~~~~~ + \lambda \widehat{\mathbb{K}}(\mathscr{F}_{21} - \mathscr{F}_{22} \mathcal{R}_1^{-1} \mathcal{H}_1) - \mathbb{B}_1 \mathcal{R}_1^{-1} \mathcal{H}_1 \\
\widehat{\mathbb{C}} :=\mathbb{C} + \widehat{\mathbb{H}}^\top \mathcal{P}(s-) + \widetilde{\mathbb{H}} (\mathscr{F}_{11} - \mathscr{F}_{12} \mathcal{R}_1^{-1} \mathcal{H}_1) \\
~~~~~~~~~~ + \lambda \widetilde{\mathbb{K}}(\mathscr{F}_{21} - \mathscr{F}_{22} \mathcal{R}_1^{-1} \mathcal{H}_1) - \mathbb{D}_1 \mathcal{R}_1^{-1} \mathcal{H}_1 \\
\widehat{\mathbb{F}} := \mathbb{F} + \widehat{\mathbb{K}}^\top \mathcal{P}(s-) + \widetilde{\mathbb{K}}^\top (\mathscr{F}_{11} - \mathscr{F}_{12} \mathcal{R}_1^{-1} \mathcal{H}_1) \\
~~~~~~~~~~ + \lambda \overline{\mathbb{K}} (\mathscr{F}_{21} - \mathscr{F}_{22} \mathcal{R}_1^{-1} \mathcal{H}_1) - \mathbb{G}_1 \mathcal{R}_1^{-1} \mathcal{H}_1.
\end{cases}
\end{align*}

In summary, we have the following result:
\begin{theorem}\label{Theorem_2}
Suppose that Assumptions \ref{Assumption_1}-\ref{Assumption_3} hold. Assume that $(\mathcal{P},\Psi,\Theta) \in \mathcal{C}_{\mathbb{F}}^2(t,T;\mathbb{R}^{2n \times 2n}) \times \mathcal{L}_{\mathbb{F}}^2(t,T;\mathbb{R}^{2n \times 2n}) \times \mathcal{L}_{\mathbb{F},p}^2(t,T;\mathbb{R}^{2n \times 2n})$ is the solution of the ISRDE in (\ref{eq_3_21}), and $\mathcal{X}$ is the solution of (\ref{eq_3_22}). Define the transformations in (\ref{eq_3_12}) and (\ref{eq_3_20}), and consider the control in (\ref{eq_3_17}). Then (\ref{eq_3_10}) and (\ref{eq_3_11}) hold. In addition, suppose that (\ref{eq_3_3}) holds. Then the state-feedback type control in (\ref{eq_3_17}) is the optimal control for \textbf{(LQ-L)}, and the associated optimal cost is given by
\begin{align}
\label{eq_3_23}
J_1(a;\overline{u}_1,\overline{u}_2) = \inf_{u_1 \in \mathcal{U}_1} J_1(a;u_1,\overline{u}_2) = \langle a, \mathcal{P}_{11} (t) a \rangle.	
\end{align}
\end{theorem}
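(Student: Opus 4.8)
The plan is to show that the feedback control \eqref{eq_3_17}, together with the processes produced by the Four-Step transformations, is an admissible control whose associated forward-backward data satisfies both the augmented state system \eqref{eq_3_10} and the first-order optimality condition \eqref{eq_3_11}; optimality and the cost formula then follow from Lemma \ref{Lemma_1} and the duality identity \eqref{eq_3_8}.

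First I would verify the forward-backward consistency of the construction. Starting from the closed-loop forward equation \eqref{eq_3_22}, whose solution $\mathcal{X}$ is assumed to exist, I define $\mathcal{Y} := \mathcal{P}(\cdot)\mathcal{X}(\cdot)$ as in \eqref{eq_3_12}, define $(\mathcal{Z},\mathcal{K})$ by \eqref{eq_3_20}, and take $\overline{u}_1$ as in \eqref{eq_3_17}. Substituting \eqref{eq_3_17}, \eqref{eq_3_12} and \eqref{eq_3_20} into the $\mathcal{X}$-equation of \eqref{eq_3_10} recovers exactly \eqref{eq_3_22}, so the forward part of \eqref{eq_3_10} holds. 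For the backward part I would apply It\^o's formula for L\'evy-type integrals (with $E=\{1\}$ under Assumption \ref{Assumption_3}) to $\mathcal{Y}=\mathcal{P}\mathcal{X}$, using \eqref{eq_3_21} for $\dd \mathcal{P}$ and \eqref{eq_3_22} for $\dd \mathcal{X}$. Matching the coefficients of $\dd B$ and $\dd \widetilde{N}$ reproduces precisely the linear system \eqref{eq_3_16_1}, namely $\mathscr{A}[\mathcal{Z};\mathcal{K}]^\top = [\cdots]$; the invertibility conditions built into \eqref{eq_3_21} guarantee that this system admits the unique solution \eqref{eq_3_16}, i.e. \eqref{eq_3_20}, so that the martingale integrands of $\mathcal{Y}$ coincide with $\mathcal{Z}$ and $\mathcal{K}$. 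The $\dd s$ drift of $\mathcal{Y}$ then equals the backward drift in \eqref{eq_3_10} exactly because the drift of the ISRDE \eqref{eq_3_21} was defined by collecting those very terms, and the terminal condition $\mathcal{Y}(T)=\mathbb{M}_1\mathcal{X}(T)$ holds since $\mathcal{P}(T)=\mathbb{M}_1$. This establishes \eqref{eq_3_10}, while \eqref{eq_3_11} holds by the very construction of \eqref{eq_3_17}, using invertibility of $\mathcal{R}_1$ in \eqref{eq_3_19} (also part of \eqref{eq_3_21}).

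Next I would confirm admissibility and invoke Lemma \ref{Lemma_1}. Since the coefficients are uniformly bounded by Assumptions \ref{Assumption_1}--\ref{Assumption_2} and $(\mathcal{P},\Psi,\Theta)$ and $\mathcal{X}$ lie in the stated spaces, the feedback $\overline{u}_1 = -\mathcal{R}_1^{-1}\mathcal{H}_1 \mathcal{X}(s-)$ belongs to $\mathcal{U}_1$, and the components $(\overline{x},\beta,\phi,\theta,\psi,\alpha,\eta,\gamma)$ read off from $(\mathcal{X},\mathcal{Y},\mathcal{Z},\mathcal{K})$ through the augmented definitions \eqref{eq_3_10_1_1_1_1_1_1} solve the coupled FBSDE \eqref{eq_3_2} with the required regularity. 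Because \eqref{eq_3_11} --- equivalently \eqref{eq_3_4} --- holds and the hypothesis \eqref{eq_3_3} is assumed, the sufficiency part of Lemma \ref{Lemma_1} yields that $\overline{u}_1$ is optimal for \textbf{(LQ-L)}.

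Finally, for the cost I would use the duality identity \eqref{eq_3_8}, whose integrand is $\langle \overline{u}_1(s), R_1(s)\overline{u}_1(s) + \widehat{B}_1(s)^\top \alpha(s-) + \cdots \rangle$; this inner product vanishes identically by \eqref{eq_3_4}, so \eqref{eq_3_8} collapses to $J_1(a;\overline{u}_1,\overline{u}_2)=\mathbb{E}[\langle \overline{x}(t),\alpha(t)\rangle]$. Using $\mathcal{X}(t)=\overline{\mathcal{X}}=(a,0)^\top$ and $\mathcal{Y}(t)=\mathcal{P}(t)\mathcal{X}(t)$, the first block gives $\alpha(t)=\mathcal{P}_{11}(t)a$, so that $J_1 = \mathbb{E}[\langle a,\mathcal{P}_{11}(t)a\rangle]=\langle a,\mathcal{P}_{11}(t)a\rangle$ (the last equality since $\mathcal{P}_{11}(t)$ is $\mathcal{F}_t$-measurable and hence deterministic at the initial time), which is \eqref{eq_3_23}. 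I expect the main obstacle to be the first paragraph: carefully tracking the cross-coupled quadratic variations of $B$ and $\widetilde{N}$ so that matching the martingale parts genuinely reduces to the block system \eqref{eq_3_16_1}, and checking that the several determinant conditions listed in \eqref{eq_3_21} are exactly those needed for $(\mathcal{Z},\mathcal{K})$ and $\overline{u}_1$ to be well defined; the remaining steps are consequences of reversing the derivation that produced \eqref{eq_3_21}.
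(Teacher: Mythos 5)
Your proposal is correct and follows essentially the same route as the paper: the forward--backward consistency of \eqref{eq_3_10}--\eqref{eq_3_11} with \eqref{eq_3_12}, \eqref{eq_3_20} and \eqref{eq_3_17} is exactly what the paper attributes to ``the preceding analysis,'' optimality is obtained by invoking Lemma \ref{Lemma_1} under \eqref{eq_3_3}, and the cost formula comes from applying It\^o's formula as in \eqref{eq_3_8} and letting the integrand vanish via the optimality condition \eqref{eq_3_11}. Your added remarks --- that the determinant conditions in \eqref{eq_3_21} are precisely what make the block system \eqref{eq_3_16_1} solvable, and that $\mathcal{P}_{11}(t)$ is deterministic at the initial time so the expectation can be dropped --- are accurate elaborations of steps the paper leaves implicit.
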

\begin{proof}
The statement that (\ref{eq_3_10})-(\ref{eq_3_11}) are equivalent to (\ref{eq_3_12}),  (\ref{eq_3_20}) and (\ref{eq_3_17}) follows from the preceding analysis. Furthermore, Lemma \ref{Lemma_1} implies that under (\ref{eq_3_3}), the state-feedback type control in (\ref{eq_3_17}) is the optimal control for \textbf{(LQ-L)}. 

We now prove (\ref{eq_3_23}). By applying It\^o's formula to (\ref{eq_3_10}) (see (\ref{eq_3_8})), 
\begin{align*}
J_1(a;\overline{u}_1,\overline{u}_2)  & = \mathbb{E} \Bigl [ \langle \mathcal{X}(t),\mathcal{Y}(t) \rangle + 	\int_t^T \Bigl \langle \overline{u}_1(s), R_1(s) \overline{u}_1(s) + \mathbb{B}_1(s)^\top \mathcal{Y}(s-)  \\
&~~~~~ + \mathbb{D}_1(s)^\top \mathcal{Z}(s) + \lambda \mathbb{G}_1(s)^\top \mathcal{K}(s) + \mathbb{H}_1(s) \mathcal{X}(s-) + \lambda \mathbb{K}_1(s) \mathcal{X}(s-) \Bigr \rangle \dd s \Bigr ] \nonumber \\
& = \langle a, \mathcal{P}_{11}(t) a \rangle, \nonumber
\end{align*}
where the second equality follows from (\ref{eq_3_12}), the first-order optimality condition in (\ref{eq_3_11}), and the initial condition $\overline{\mathcal{X}}$. This completes the proof of the theorem.
\end{proof}

Under Assumptions \ref{Assumption_1}-\ref{Assumption_3}, and using (\ref{eq_2_13}) and (\ref{eq_3_17}), we consider
\begin{align}
\label{eq_3_25}
\begin{cases}	
\overline{u}_1(s) = - \mathcal{R}_1(s)^{-1} \mathcal{H}_1(s) \begin{bmatrix}
 x(s-) \\
 \beta(s-)	
 \end{bmatrix} \\
\overline{u}_2(s)  = - \widehat{R}_2(s)^{-1} \widehat{S}_2(s)^\top x(s-) - \widehat{R}_2(s)^{-1} \Bigl (B_2(s)^\top \phi(s-)  \\
~~~ + D_2(s)^\top \theta(s) + \lambda G_2(s)^\top \psi(s) - \widehat{S}_1(s) \mathcal{R}_1(s)^{-1} \mathcal{H}_1(s) \begin{bmatrix}
 x(s-) \\
 \beta(s-)	
 \end{bmatrix}
 \Bigr ).
\end{cases}
\end{align}
Note that $\overline{u}_2$ in (\ref{eq_3_25}) is the state-feedback type optimal control of the follower when $u_1 \equiv \overline{u}_1$. This corresponds to the situation when the leader announces $\overline{u}_1$ to the follower in the Stackelberg game.

\begin{corollary}\label{Corollary_1}
Suppose that the assumptions of Theorems \ref{Theorem_1} and \ref{Theorem_2} hold. Then $(\overline{u}_1,\overline{u}_2) \in \mathcal{U}_1 \times \mathcal{U}_2$ in (\ref{eq_3_25}) constitutes the state-feedback representation of the open-loop Stackelberg equilibrium for the leader and the follower.
\end{corollary}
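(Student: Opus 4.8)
The plan is to obtain the result as a direct synthesis of Theorems~\ref{Theorem_1} and~\ref{Theorem_2}, by verifying the two-level optimality conditions (\ref{eq_4}) and (\ref{eq_5}) that define the open-loop Stackelberg equilibrium. The lower level (the follower) is handled by Theorem~\ref{Theorem_1}: for \emph{any} admissible $u_1 \in \mathcal{U}_1$ announced by the leader, the state-feedback control (\ref{eq_2_13}) is the minimizer of $J_2$, so that $J_2(a;u_1,\overline{u}_2[a,u_1]) = \inf_{u_2 \in \mathcal{U}_2} J_2(a;u_1,u_2)$, which is exactly (\ref{eq_4}). The upper level (the leader) is handled by Theorem~\ref{Theorem_2}: since the follower's rational reaction is encoded in the FBSDE constraint (\ref{eq_2_8})--(\ref{eq_2_12}) (equivalently the augmented system (\ref{eq_3_10})), and since (\ref{eq_3_3}) is assumed, the control $\overline{u}_1$ in (\ref{eq_3_17}) solves \textbf{(LQ-L)}, i.e. $J_1(a;\overline{u}_1,\overline{u}_2[a,\overline{u}_1]) = \inf_{u_1 \in \mathcal{U}_1} J_1(a;u_1,\overline{u}_2[a,u_1])$, which is (\ref{eq_5}). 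Together, (\ref{eq_4}) and (\ref{eq_5}) give that $(\overline{u}_1,\overline{u}_2[a,\overline{u}_1])$ is an (adapted) open-loop Stackelberg equilibrium in the sense defined after (\ref{eq_5}).

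It then remains to show that (\ref{eq_3_25}) is the state-feedback realization of this equilibrium. The first line of (\ref{eq_3_25}) is just (\ref{eq_3_17}) with the augmented state written out as $\mathcal{X}(s-) = [x(s-)^\top, \beta(s-)^\top]^\top$. For the follower, I would start from the feedback control (\ref{eq_2_13}), whose offset $f$ in (\ref{eq_2_9_1}) contains the term $\widehat{S}_1(s) u_1(s)$, and substitute the leader's closed-loop law $u_1(s) = \overline{u}_1(s) = -\mathcal{R}_1(s)^{-1} \mathcal{H}_1(s)\,[x(s-)^\top,\beta(s-)^\top]^\top$. Under Assumption~\ref{Assumption_3} the L\'evy integrals collapse to $\lambda$ times their integrands, so that $f(s) = B_2(s)^\top \phi(s-) + D_2(s)^\top \theta(s) + \lambda G_2(s)^\top \psi(s) + \widehat{S}_1(s)\overline{u}_1(s)$; inserting this into (\ref{eq_2_13}) yields precisely the second line of (\ref{eq_3_25}). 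Admissibility, $(\overline{u}_1,\overline{u}_2) \in \mathcal{U}_1 \times \mathcal{U}_2$, then follows from the integrability of $(\mathcal{P},\Psi,\Theta)$, of the closed-loop state $\mathcal{X}$ from (\ref{eq_3_22}), and of $(\phi,\theta,\psi)$ from (\ref{eq_2_12}), together with the uniform boundedness of the coefficients under Assumptions~\ref{Assumption_1} and~\ref{Assumption_2}.

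The main obstacle I anticipate is the consistency between the two feedback laws rather than any single estimate. Theorem~\ref{Theorem_1} produces $\overline{u}_2$ as the follower's response to a \emph{generic} open-loop $u_1$, through the $u_1$-dependent BSDE $(\phi,\theta,\psi)$ in (\ref{eq_2_12}), while Theorem~\ref{Theorem_2} produces $\overline{u}_1$ as a feedback of the augmented state $\mathcal{X}=[\overline{x}^\top,\beta^\top]^\top$. One must verify that, once the leader actually announces $\overline{u}_1$, the processes $(\phi,\theta,\psi)$ and $\beta$ appearing in (\ref{eq_3_25}) are exactly those generated by the coupled FBSDEs (\ref{eq_3_2}) driven by this $\overline{u}_1$, so that a single closed-loop state $\mathcal{X}$ from (\ref{eq_3_22}) simultaneously feeds both control laws. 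This is precisely where the Four-Step Scheme is indispensable: the transformation (\ref{eq_3_12}) guarantees $\mathcal{Y}(s-)=\mathcal{P}(s-)\mathcal{X}(s-)$ along the closed-loop trajectory, which is what makes the open-loop optimal pair coincide with its feedback representation (\ref{eq_3_25}). Once this identification is in place, the remaining verification is routine substitution, and the claim follows.
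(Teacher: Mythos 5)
Your proposal is correct and follows essentially the same route as the paper, which offers no separate proof of Corollary~\ref{Corollary_1}: the equilibrium property is exactly the combination of Theorem~\ref{Theorem_1} (giving (\ref{eq_4})) and Theorem~\ref{Theorem_2} (giving (\ref{eq_5})), and the representation (\ref{eq_3_25}) is obtained, as you describe, by substituting the leader's feedback law (\ref{eq_3_17}) into $f$ in (\ref{eq_2_9_1}) and hence into (\ref{eq_2_13}), with the consistency of $(\phi,\theta,\psi,\beta)$ guaranteed by the coupled FBSDE (\ref{eq_3_2}) and the transformation (\ref{eq_3_12}). Your explicit flagging of the consistency issue between the two feedback laws is a point the paper leaves implicit, but it is resolved exactly as you indicate.
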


\subsection{Case II: The jump part in (\ref{eq_1}) does not depend on $u_2$} \label{Section_3_2}

We assume that the control of the follower, $u_2$, is not included in the jump part of (\ref{eq_1}), i.e.,
\begin{assumption}\label{Assumption_4}
$G_2 = 0$.	
\end{assumption}

\begin{remark}\label{Remark_4}
Assumption \ref{Assumption_4} implies that $\widehat{K}_2 = \widetilde{K}_2 = \overline{K}_2 = \widehat{\mathbb{K}} = \widetilde{\mathbb{K}} = \overline{\mathbb{K}} = 0$ $\widehat{F} = F$, and $\widehat{G}_1 = G_1$ (see (\ref{eq_2_10_1_1_1}) and (\ref{eq_3_10_1_1_1_1_1})).	
\end{remark}

Under Assumption \ref{Assumption_4}, in the Four-Step Scheme, (\ref{eq_3_14}) becomes
\begin{align}
\label{eq_3_26}
\dd \mathcal{Y}(s) & = - \Bigl [ \mathbb{A}(s)^\top \mathcal{P}(s-) \mathcal{X}(s-) + \mathbb{Q}(s) \mathcal{X}(s-) +   \mathbb{C}(s)^\top \mathcal{Z}(s)  + \mathbb{H}_1(s)^\top \overline{u}_1(s) \\
& ~~~~~~~~~~ + \int_{E} \mathbb{F}(s,e)^\top \mathcal{K}(s,e)    \lambda (\dd e) + \int_{E} \mathbb{K}_1(s,e)^\top \overline{u}_1(s) \lambda (\dd e) \Bigr ] \dd s \nonumber \\
& ~~~  + \mathcal{Z}(s) \dd B(s) + \int_{E} \mathcal{K}(s,e) \widetilde{N}(\dd e, \dd s) \nonumber \\
& = \Bigl [ \Lambda_3(s) \dd s + \Psi(s) \dd B(s) + \int_{E} \Theta(s,e) \widetilde{N}(\dd e, \dd s) \Bigr ] \mathcal{X}(s-) \nonumber \\
&~~~ + \mathcal{P}(s-) \Bigl [ \mathbb{A}(s)\mathcal{X}(s-) + \mathbb{B}_2(s) \mathcal{P}(s-) \mathcal{X}(s-)  + \widehat{\mathbb{H}}(s)\mathcal{Z}(s) + \mathbb{B}_1(s) \overline{u}_1(s) \Bigr ]\dd s 	\nonumber \\
&~~~ + \mathcal{P}(s-) \Bigl [ \mathbb{C}(s)\mathcal{X}(s-) + \widehat{\mathbb{H}}(s)^\top\mathcal{P}(s-) \mathcal{X}(s-)  + \widetilde{\mathbb{H}}(s) \mathcal{Z}(s) + \mathbb{D}_1(s) \overline{u}_1(s) \Bigr ] \dd B(s) \nonumber \\
&~~~ + \Psi(s)^\top \Bigl [ \mathbb{C}(s)\mathcal{X}(s-) + \widehat{\mathbb{H}}(s)^\top \mathcal{P}(s-) \mathcal{X}(s-)  + \widetilde{\mathbb{H}}(s) \mathcal{Z}(s) + \mathbb{D}_1(s) \overline{u}_1(s) \Bigr ] \dd s \nonumber \\
&~~~ + \int_{E} \Theta(s,e)^\top  \Bigl [ \mathbb{F}(s,e) \mathcal{X}(s-)  + \mathbb{G}_1(s) \overline{u}_1(s) \Bigr ] \lambda (\dd e) \dd s \nonumber \\
&~~~ + \int_{E} (\mathcal{P}(s-) + \Theta(s,e))  \Bigl [ \mathbb{F}(s,e) \mathcal{X}(s-)  + \mathbb{G}_1(s) \overline{u}_1(s) \Bigr ] \widetilde{N}(\dd e, \dd s). \nonumber
\end{align}

With the invertibility of $(I - \mathcal{P}(s-) \widetilde{\mathbb{H}})$, ($s$ is suppressed)
\begin{align}
\label{eq_3_27}
\begin{cases}
	 \mathcal{Z}(s) = (I - \mathcal{P}(s-) \widetilde{\mathbb{H}})^{-1} \bigl ( (\mathcal{P}(s-) \mathbb{C} + \mathcal{P}(s-)  \widehat{\mathbb{H}}^\top \mathcal{P}(s-) + \Psi)\mathcal{X} + \mathcal{P}(s-) \mathbb{D}_1 \overline{u}_1 \bigr )\\
	\mathcal{K}(s,e) = (\Theta(s,e)  + (\mathcal{P}(s-) + \Theta(s,e))\mathbb{F})\mathcal{X} + (\mathcal{P}(s-) + \Theta(s,e)) \mathbb{G}_1 \overline{u}_1.
\end{cases}	
\end{align}

By substituting (\ref{eq_3_27}) into (\ref{eq_3_11}), we have 
\begin{align}
\label{eq_3_28}
\overline{u}_1(s) 
& = - \widehat{\mathcal{R}}_1(s)^{-1} \widehat{\mathcal{B}}_1(s) \mathcal{X}(s-),	 
\end{align}
provided that $\widehat{\mathcal{R}}_1$ is invertible, where ($s$ is suppressed)
\begin{align}
\label{eq_3_29}
\begin{cases}
\widehat{\mathcal{R}}_1 := R_1 + \mathbb{D}_1^\top (I - \mathcal{P}(s-) \widetilde{\mathbb{H}})^{-1} \mathcal{P}(s-) \mathbb{D}_1 \\
~~~ + \int_{E} \mathbb{G}_1(s,e)^\top (\mathcal{P}(s-) + \Theta(s,e)) \mathbb{G}_1(s,e)   \lambda (\dd e) \\
\widehat{\mathcal{B}}_1 = \mathbb{B}_1^\top \mathcal{P}(s-) + \mathbb{H}_1 + \int_{E} \mathbb{K}_1(s,e) \lambda (\dd e) \\
~~~ + \mathbb{D}_1^\top (I - \mathcal{P}(s-) \widetilde{\mathbb{H}})^{-1} (\mathcal{P}(s-) \mathbb{C} + \mathcal{P}(s-)  \widehat{\mathbb{H}}^\top \mathcal{P}(s-) + \Psi)  \\
~~~ + \int_{E} \mathbb{G}_1(s,e)^\top (\Theta(s,e)  + (\mathcal{P}(s-) + \Theta(s,e))\mathbb{F}(s,e)) \lambda (\dd e).
\end{cases}
\end{align}
Then substituting (\ref{eq_3_28}) into (\ref{eq_3_27}) yields
\begin{align}
\label{eq_3_30}
	\mathcal{Z}(s) = \widehat{\mathscr{F}}_{1}(s) \mathcal{X}(s-),~ \mathcal{K}(s,e) = \widehat{\mathscr{F}}_{2}(s,e) \mathcal{X}(s-),
\end{align}
where ($s$ is suppressed)
\begin{align}
\label{eq_3_35_1_1_1_1}
\begin{cases}
\widehat{\mathscr{F}}_{1} := (I - \mathcal{P}(s-) \widetilde{\mathbb{H}})^{-1} \bigl ( (\mathcal{P}(s-) \mathbb{C} + \mathcal{P}(s-)  \widehat{\mathbb{H}}^\top \mathcal{P}(s-) + \Psi) - \mathcal{P}(s-) \mathbb{D}_1 \widehat{\mathcal{R}}_1^{-1} \widehat{\mathcal{B}}_1 \bigr ) \\
\widehat{\mathscr{F}}_{2} := \Theta(s,e)  + (\mathcal{P}(s-) + \Theta(s,e))\mathbb{F} - (\mathcal{P}(s-) + \Theta(s,e)) \mathbb{G}_1 \widehat{\mathcal{R}}_1^{-1} \widehat{\mathcal{B}}_1. 
\end{cases}	
\end{align}
%
%

We substitute (\ref{eq_3_30}) and (\ref{eq_3_28}) into (\ref{eq_3_26}). Then, together with the invertibility conditions in (\ref{eq_3_30}) and (\ref{eq_3_28}) and the notation in (\ref{eq_3_10_1_1_1_1_1}) and (\ref{eq_3_35_1_1_1_1}), the ISRDE in (\ref{eq_3_13}) has to be as follows ($s$ is suppressed):
\begin{align}
\label{eq_3_31}
\begin{cases}
\dd \mathcal{P}(s) = - \Bigl [ \mathbb{A}^\top \mathcal{P}(s-)  + \mathcal{P}(s-) \mathbb{A} + \mathbb{Q} + \mathcal{P}(s-) \mathbb{B}_2 \mathcal{P}(s-) \\
~~~~~~~~~~~~~ + \Psi^\top \mathbb{C} + \Psi \widehat{\mathbb{H}}^\top \mathcal{P}(s-) + \int_{E} \Theta(s,e)^\top \mathbb{F}(s,e) \lambda (\dd e) \\
~~~~~~~~~~~~~ + (\mathbb{C}^\top + \mathcal{P}(s-) \widehat{\mathbb{H}} + \Psi^\top \widetilde{\mathbb{H}}) \widehat{\mathscr{F}}_{1}  + \int_{E} \mathbb{F}(s,e)^\top \widehat{\mathscr{F}}_{2}(s,e) 
 \lambda (\dd e) \\
 ~~~~~~~~~~~~~ - \bigl (\mathbb{H}_1^\top + \int_{E} \mathbb{K}_1(s,e)^\top \lambda (\dd e)   + \mathcal{P}(s-) \mathbb{B}_1 \\
 ~~~~~~~~~~~~~~~~~~ + \Psi^\top \mathbb{D}_1 + \int_{E} \Theta(s,e)^\top \lambda (\dd e) \mathbb{G}_1 \bigr ) \widehat{\mathcal{R}}_{1}^{-1} \widehat{\mathcal{B}}_1 \Bigr ] \dd s\\
~~~~~~~ + \Psi \dd B(s) + \int_{E} \Theta(s,e) \widetilde{N}(\dd e, \dd s),~ s \in [t,T)
\\
\mathcal{P}(T) = \mathbb{M}_1 \\
\det (I - \mathcal{P}(s-) \widetilde{\mathbb{H}}(s)) \neq 0,~ \forall s \in [t,T] \\
\det (\widehat{R}_1(s)) \neq 0,~ \forall s \in [t,T].
\end{cases}	
\end{align}
\begin{remark}\label{Remark_6_1_1_1_1_1_2}
	{\color{black} Due to Assumption \ref{Assumption_4}, we only need the invertibility of $(I - \mathcal{P}(s-) \widetilde{\mathbb{H}})$ to find the expression of $\mathcal{Z}$ and $\mathcal{K}$ in (\ref{eq_3_30}), which is different from the case with Assumption \ref{Assumption_3} in (\ref{eq_3_20}). Note that when there are no jumps, we have $\widehat{\mathscr{F}}_{2} = 0$ ($\mathcal{K} = 0$) in (\ref{eq_3_30}), in which case the ISRDE in (\ref{eq_3_31}) degenerates to the SRDE in \cite[(3.38)]{Jiongmin_Yong_1}. This means that the Four-Step Scheme of \textbf{(LQ-L)} under Assumption \ref{Assumption_4} generalizes \cite[Section 3]{Jiongmin_Yong_1} to the problem of jump-diffusion models.
	}
\end{remark}

Applying (\ref{eq_3_28}), (\ref{eq_3_12}) and  (\ref{eq_3_30}) to $\mathcal{X}$ in (\ref{eq_3_10}) yields
\begin{align}
\label{eq_3_32}
\begin{cases}
\dd \mathcal{X}(s) =   \widetilde{\mathbb{A}}(s)\mathcal{X}(s-) \dd s  + \widetilde{\mathbb{C}}(s)\mathcal{X}(s-)  \dd B(s) \\
~~~~~~~~~~ + \int_{E}  \widetilde{\mathbb{F}}(s,e) \mathcal{X}(s-)   \widetilde{N}(\dd e, \dd s),~ s \in (t,T] \\
\mathcal{X}(t) = \overline{\mathcal{X}},
\end{cases}
\end{align}
where ($s$ is suppressed)
\begin{align*}
\begin{cases}
\widetilde{\mathbb{A}} := \mathbb{A} + \mathbb{B}_2 \mathcal{P}(s-) + \widehat{\mathbb{H}}	 \widehat{\mathscr{F}}_{1} - \mathbb{B}_1 \widehat{\mathcal{R}}_1^{-1} \widehat{\mathcal{H}}_1 \\
\widetilde{\mathbb{C}} := \mathbb{C} + \widehat{\mathbb{H}}^\top \mathcal{P}(s-) + \widetilde{\mathbb{H}} \widehat{\mathscr{F}}_{1} - \mathbb{D}_1 \widehat{\mathcal{R}}_1^{-1} \widehat{\mathcal{H}}_1\\
\widetilde{\mathbb{F}} := \mathbb{F} - \mathbb{G}_1 \widehat{\mathcal{R}}_1^{-1} \widehat{\mathcal{H}}_1.
\end{cases}
\end{align*}

\begin{theorem}\label{Theorem_3}
Suppose that Assumptions \ref{Assumption_1}, \ref{Assumption_2} and \ref{Assumption_4} hold. Let $(\mathcal{P},\Psi,\Theta) \in \mathcal{C}_{\mathbb{F}}^2(t,T;\mathbb{R}^{2n \times 2n}) \times \mathcal{L}_{\mathbb{F}}^2(t,T;\mathbb{R}^{2n \times 2n}) \times \mathcal{L}_{\mathbb{F},p}^2(t,T;\mathbb{R}^{2n \times 2n})$ be the solution of the ISRDE in (\ref{eq_3_31}), and $\mathcal{X}$ the solution of (\ref{eq_3_32}). Define the transformations in (\ref{eq_3_12}) and (\ref{eq_3_30}), and consider the control in (\ref{eq_3_28}). Then (\ref{eq_3_10}) and (\ref{eq_3_11}) hold. In addition, suppose that (\ref{eq_3_3}) holds. Then the state-feedback type control in (\ref{eq_3_28}) is the optimal control for \textbf{(LQ-L)}, and the associated optimal cost is given by
\begin{align*}
J_1(a;\overline{u}_1,\overline{u}_2) = \inf_{u_1 \in \mathcal{U}_1} J_1(a;u_1,\overline{u}_2) = \langle a, \mathcal{P}_{11} (t) a \rangle.
\end{align*}
\end{theorem}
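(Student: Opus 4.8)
The plan is to mirror the structure of the proof of Theorem \ref{Theorem_2}, adapted to the setting $G_2 = 0$ (Assumption \ref{Assumption_4}). First I would verify the equivalence between the coupled FBSDE system \eqref{eq_3_10}--\eqref{eq_3_11} and the decoupled triple \eqref{eq_3_12}, \eqref{eq_3_30}, \eqref{eq_3_28}. This follows directly from the Four-Step Scheme analysis carried out in Section \ref{Section_3_2}: under the ansatz $\mathcal{Y}(s) = \mathcal{P}(s)\mathcal{X}(s)$ with $\mathcal{P}$ of the form \eqref{eq_3_13}, applying It\^o's formula gives \eqref{eq_3_26}; matching the $\dd s$, $\dd B(s)$, and $\widetilde{N}(\dd e,\dd s)$ coefficients yields the expressions \eqref{eq_3_27} for $(\mathcal{Z},\mathcal{K})$, and substitution into the optimality condition \eqref{eq_3_11} produces the feedback control \eqref{eq_3_28}. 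Requiring consistency of the drift terms forces $\mathcal{P}$ to satisfy the ISRDE \eqref{eq_3_31}, so that whenever $(\mathcal{P},\Psi,\Theta)$ solves \eqref{eq_3_31} and $\mathcal{X}$ solves \eqref{eq_3_32}, the constructed $(\mathcal{X},\mathcal{Y},\mathcal{Z},\mathcal{K},\overline{u}_1)$ satisfies both \eqref{eq_3_10} and \eqref{eq_3_11}.

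Next, invoking Lemma \ref{Lemma_1}: since the constructed processes satisfy the coupled FBSDE \eqref{eq_3_2} (equivalently \eqref{eq_3_10}) together with the first-order optimality condition \eqref{eq_3_4} (equivalently \eqref{eq_3_11}), and since the convexity hypothesis \eqref{eq_3_3} is assumed, Lemma \ref{Lemma_1} directly implies that the state-feedback control \eqref{eq_3_28} is optimal for \textbf{(LQ-L)}. Here the invertibility conditions $\det(I - \mathcal{P}(s-)\widetilde{\mathbb{H}}(s)) \neq 0$ and $\det(\widehat{\mathcal{R}}_1(s)) \neq 0$ appearing in \eqref{eq_3_31} guarantee that the feedback representation is well-defined along the optimal trajectory.

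Finally, to compute the optimal cost, I would apply It\^o's formula to $\langle \mathcal{X}(s), \mathcal{Y}(s)\rangle = \langle \mathcal{X}(s), \mathcal{P}(s)\mathcal{X}(s)\rangle$ along the optimal dynamics, exactly as in the cost computation of Theorem \ref{Theorem_2}. Integrating from $t$ to $T$ and taking expectations, the terminal condition $\mathcal{Y}(T) = \mathbb{M}_1 \mathcal{X}(T)$ reproduces the terminal cost $|x(T)|^2_{M_1}$, and the running terms reassemble into $J_1(a;\overline{u}_1,\overline{u}_2)$ up to the bracket
\begin{align*}
\Bigl \langle \overline{u}_1(s), R_1(s)\overline{u}_1(s) + \mathbb{B}_1(s)^\top \mathcal{Y}(s-) + \mathbb{D}_1(s)^\top \mathcal{Z}(s) + \int_{E}\mathbb{G}_1(s,e)^\top \mathcal{K}(s,e)\lambda(\dd e) + \mathbb{H}_1(s)\mathcal{X}(s-) + \int_{E}\mathbb{K}_1(s,e)\mathcal{X}(s-)\lambda(\dd e) \Bigr \rangle,
\end{align*}
which vanishes identically by the optimality condition \eqref{eq_3_11}. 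This leaves $J_1(a;\overline{u}_1,\overline{u}_2) = \mathbb{E}[\langle \mathcal{X}(t),\mathcal{Y}(t)\rangle] = \langle \overline{\mathcal{X}}, \mathcal{P}(t)\overline{\mathcal{X}}\rangle$, and since $\overline{\mathcal{X}} = (a,0)^\top$, only the upper-left block survives, giving $\langle a, \mathcal{P}_{11}(t)a\rangle$.

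The main obstacle I anticipate is the coefficient-matching step that derives the ISRDE \eqref{eq_3_31}: one must carefully track the cross terms arising from the quadratic variation of the jump integral against $\mathcal{P}$ and $\Theta$ (the $(\mathcal{P}(s-)+\Theta(s,e))$ factors) and confirm that, after substituting the feedback forms of $\mathcal{Z}$ and $\mathcal{K}$ from \eqref{eq_3_30}, the drift collapses precisely into the right-hand side of \eqref{eq_3_31}. Relative to Theorem \ref{Theorem_2}, this is simplified by Assumption \ref{Assumption_4}, which (via Remark \ref{Remark_4}) kills $\widehat{\mathbb{K}} = \widetilde{\mathbb{K}} = \overline{\mathbb{K}} = 0$ and reduces the block system \eqref{eq_3_16_1} to the single invertibility requirement on $(I - \mathcal{P}(s-)\widetilde{\mathbb{H}})$; nonetheless, the verification that the nonsymmetric, nonlinear ISRDE \eqref{eq_3_31} is exactly the consistency condition remains the computationally delicate part, and the rest of the argument is a routine adaptation of Theorem \ref{Theorem_2}.
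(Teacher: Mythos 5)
Your proposal is correct and follows essentially the same route as the paper: the paper omits an explicit proof of Theorem \ref{Theorem_3} precisely because it is the verbatim analogue of the proof of Theorem \ref{Theorem_2} (equivalence via the Four-Step Scheme, optimality via Lemma \ref{Lemma_1} under (\ref{eq_3_3}), and the cost identity via It\^o's formula applied to $\langle \mathcal{X},\mathcal{Y}\rangle$ with the integrand killed by (\ref{eq_3_11})), which is exactly the argument you reconstruct under Assumption \ref{Assumption_4}.
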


Using (\ref{eq_2_13}) and (\ref{eq_3_28}), we introduce
\begin{align}
\label{eq_3_34}
\begin{cases}
\overline{u}_1(s) = - \widehat{\mathcal{R}}_1(s)^{-1} \widehat{\mathcal{B}}_1(s) \begin{bmatrix}
 x(s-) \\
 \beta(s-)	
 \end{bmatrix} \\
\overline{u}_2(s)   = - \widehat{R}_2(s)^{-1} \widehat{S}_2(s)^\top x(s-) - \widehat{R}_2(s)^{-1} \Bigl (B_2(s)^\top \phi(s-)  \\
~~~ + D_2(s)^\top \theta(s) - \widehat{S}_1(s) \widehat{\mathcal{R}}_1(s)^{-1} \widehat{\mathcal{B}}_1(s) \begin{bmatrix}
 x(s-) \\
 \beta(s-)	
 \end{bmatrix} \Bigr ).
\end{cases}	
\end{align}
\begin{corollary}\label{Corollary_2}
Suppose that the assumptions of Theorems \ref{Theorem_1} and \ref{Theorem_3} hold. Then $(\overline{u}_1,\overline{u}_2) \in \mathcal{U}_1 \times \mathcal{U}_2$ in (\ref{eq_3_34}) constitutes the state-feedback representation of the open-loop Stackelberg equilibrium for the leader and the follower.
\end{corollary}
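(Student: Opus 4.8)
The plan is to assemble the equilibrium directly from Theorems~\ref{Theorem_1} and~\ref{Theorem_3}, exploiting the fact that by design the leader's problem \textbf{(LQ-L)} already embeds the follower's rational reaction as the FBSDE constraint \eqref{eq_2_8}--\eqref{eq_2_12}. First I would invoke Theorem~\ref{Theorem_1} with the particular choice $u_1=\overline{u}_1$: for any admissible leader control, hence in particular for $\overline{u}_1$, the map $\overline{u}_2[a,u_1]$ given by the state-feedback law \eqref{eq_2_13} is the unique minimizer of $J_2(a;u_1,\cdot)$, so it realizes \eqref{eq_4}. Under Assumption~\ref{Assumption_4} the term $\int_E G_2^\top\psi\,\lambda(\dd e)$ in the definition of $f$ in \eqref{eq_2_9_1} vanishes, leaving $f(s)=B_2(s)^\top\phi(s-)+D_2(s)^\top\theta(s)+\widehat{S}_1(s)u_1(s)$.

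Next I would invoke Theorem~\ref{Theorem_3}: under \eqref{eq_3_3} the state-feedback control \eqref{eq_3_28}, namely $\overline{u}_1(s)=-\widehat{\mathcal{R}}_1(s)^{-1}\widehat{\mathcal{B}}_1(s)\mathcal{X}(s-)$ with $\mathcal{X}=[\overline{x}^\top,\beta^\top]^\top$, is the optimal control for \textbf{(LQ-L)} and so attains the infimum in \eqref{eq_5}. The decisive step is then to substitute this optimal $\overline{u}_1$ into the follower's feedback law: writing $u_1(s)=\overline{u}_1(s)=-\widehat{\mathcal{R}}_1(s)^{-1}\widehat{\mathcal{B}}_1(s)[x(s-)^\top,\beta(s-)^\top]^\top$ in the expression for $f$ above turns the term $\widehat{S}_1(s)u_1(s)$ into $-\widehat{S}_1(s)\widehat{\mathcal{R}}_1(s)^{-1}\widehat{\mathcal{B}}_1(s)[x(s-)^\top,\beta(s-)^\top]^\top$, and plugging this back into \eqref{eq_2_13} reproduces exactly the coupled pair \eqref{eq_3_34}. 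By construction $(\overline{u}_1,\overline{u}_2)$ then satisfies both \eqref{eq_4} and \eqref{eq_5}, which is precisely the definition of an (adapted) open-loop Stackelberg equilibrium, and \eqref{eq_3_34} is its state-feedback realization along the equilibrium trajectory; the residual quantities $\phi,\theta$ appearing in $\overline{u}_2$ are themselves feedback functions of $\mathcal{X}$ through the lower blocks of \eqref{eq_3_12} and \eqref{eq_3_30}, which confirms the state-feedback nature of the pair.

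Before concluding I would verify admissibility and internal consistency. Admissibility $(\overline{u}_1,\overline{u}_2)\in\mathcal{U}_1\times\mathcal{U}_2$ follows from the boundedness of the feedback gains (guaranteed by Assumptions~\ref{Assumption_1}--\ref{Assumption_2} together with the invertibility conditions in \eqref{eq_3_31}) and the $\mathcal{C}^2_{\mathbb{F}}$-regularity of the closed-loop state $\mathcal{X}$ solving \eqref{eq_3_32}. The consistency point to check is that the processes $\phi,\theta,\psi$ entering $\overline{u}_2$ via \eqref{eq_2_12} coincide with the lower block of the augmented backward variables $\mathcal{Y},\mathcal{Z},\mathcal{K}$ used in Theorem~\ref{Theorem_3}; this is exactly how the augmentation \eqref{eq_3_10_1_1_1_1_1_1} was set up, so the single transformation $\mathcal{Y}=\mathcal{P}\mathcal{X}$ in \eqref{eq_3_12} simultaneously encodes the follower's adjoint $(\phi,\theta,\psi)$ and the leader's adjoint $(\alpha,\eta,\gamma)$.

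The main obstacle I anticipate is not any single computation but the bookkeeping needed to confirm that the two feedback laws are mutually consistent along one closed-loop trajectory: one must check that substituting \eqref{eq_3_28} and the feedback forms \eqref{eq_3_12} and \eqref{eq_3_30} for $(\mathcal{Y},\mathcal{Z},\mathcal{K})$ into the augmented forward equation \eqref{eq_3_10} indeed yields the decoupled dynamics \eqref{eq_3_32}, so that the open-loop solution of the coupled FBSDE \eqref{eq_3_2} and the state-feedback pair \eqref{eq_3_34} generate the same state process. This verification, which is precisely where Assumption~\ref{Assumption_4} is essential since it removes $\widehat{\mathbb{K}},\widetilde{\mathbb{K}},\overline{\mathbb{K}}$ and hence the cross-jump invertibility obstructions discussed in Remark~\ref{Remark_7}, is the technical heart of the corollary, whereas the equilibrium property itself is an immediate consequence of Theorems~\ref{Theorem_1} and~\ref{Theorem_3}.
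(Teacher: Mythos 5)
Your proposal is correct and follows essentially the same route as the paper: the corollary is a direct assembly of Theorem \ref{Theorem_1} (the follower's best response \eqref{eq_2_13} realizes \eqref{eq_4} for every $u_1$, in particular for $\overline{u}_1$) and Theorem \ref{Theorem_3} (the feedback law \eqref{eq_3_28} realizes \eqref{eq_5}), with \eqref{eq_3_34} obtained by substituting the leader's optimal feedback into $f$ in \eqref{eq_2_9_1}, where Assumption \ref{Assumption_4} kills the $\int_E G_2^\top\psi\,\lambda(\dd e)$ term. The consistency and admissibility checks you flag are exactly the content already established in the Four-Step Scheme leading to Theorem \ref{Theorem_3}, so nothing further is needed.
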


We state several remarks on the results in Section \ref{Section_3}.
\begin{remark}\label{Remark_7}
\begin{enumerate}[(i)]
	\item The ISRDEs of the leader for two different cases in (\ref{eq_3_21}) and (\ref{eq_3_31}) are not symmetric due to the nonsymmetric coupling nature of $\mathcal{Z}$ and $\mathcal{K}$ in (\ref{eq_3_20}) and (\ref{eq_3_30}). In fact, we can observe that $\mathcal{R}_1$ in (\ref{eq_3_19}) and $\int_{E} \mathbb{F}(s,e)^\top \widehat{\mathscr{F}}_{2}(s,e)  \lambda (\dd e)$ in   (\ref{eq_3_31}) are not symmetric. On the other hand, for the case of SDEs in a Brownian setting without jumps, the corresponding SRDE of the leader in \cite[(3.38)]{Jiongmin_Yong_1} is symmetric.
	\item {\color{black}The well-posedness (solvability) of the ISRDEs of the leader in (\ref{eq_3_21}) and (\ref{eq_3_31}) is a challenging problem. Note that even for the case without jumps, the well-posedness of the (symmetric) SRDE of the leader in \cite[(3.38)]{Jiongmin_Yong_1} has not been solved in the existing literature. We leave the well-posedness of (\ref{eq_3_21}) and (\ref{eq_3_31}) as a future research problem.}
\item It is hard to consider the general situation (without Assumption \ref{Assumption_3} or Assumption \ref{Assumption_4}) to obtain the state-feedback type optimal control of \textbf{(LQ-L)}. Specifically, without Assumption \ref{Assumption_3} or Assumption \ref{Assumption_4}, it is necessary to use (\ref{eq_3_14}) to obtain the expression of $\mathcal{Z}$ and $\mathcal{K}$. Then from (\ref{eq_3_14}), the following holds ($s$ is suppressed):
\begin{align}
\label{eq_4_1}
\begin{cases}
	(I - \mathcal{P} (s-)\widetilde{\mathbb{H}} ) \mathcal{Z}(s)  - \mathcal{P} (s-)\int_{E} \widetilde{\mathbb{K}}(s,e) \mathcal{K}(s,e) \lambda (\dd e) \\
~~~~~	= \mathcal{P}(s-) \bigl [ \mathbb{C}\mathcal{X} + \widehat{\mathbb{H}}^\top\mathcal{P} \mathcal{X}     + \mathbb{D}_1(s) \overline{u}_1 \bigr ]  + \Psi \mathcal{X} \\
\mathcal{K}(s,e) - (\mathcal{P}(s-) + \Theta(s,e)) \bigl ( \int_{E} \overline{\mathbb{K}}(s,e,e^\prime) \mathcal{K}(s,e^\prime) \lambda (\dd e^\prime) + \widetilde{\mathbb{K}}^\top (s,e) \mathcal{Z}(s) \bigr ) \\
~~~~~ = (\mathcal{P}(s-) + \Theta(s,e))  \bigl [ \mathbb{F} \mathcal{X} + \widehat{\mathbb{K}}^\top \mathcal{P}(s-) \mathcal{X}  + \mathbb{G}_1(s,e) \overline{u}_1 \bigr ] + \Theta(s,e) \mathcal{X}.
\end{cases}	
\end{align}
Due to the integral terms $\int_{E} \widetilde{\mathbb{K}}(s,e) \mathcal{K}(s,e) \lambda (\dd e)$ and $\int_{E} \overline{\mathbb{K}}(s,e,e^\prime) \mathcal{K}(s,e^\prime) \lambda (\dd e^\prime)$, and the cross-coupling structure of $\mathcal{Z}$ and $\mathcal{K}$ in (\ref{eq_4_1}), there is a technical challenge to find the explicit expression of $\mathcal{Z}$ and $\mathcal{K}$ in (\ref{eq_4_1}). In (\ref{eq_4_1}), Assumption \ref{Assumption_3} implies $\int_{E} \widetilde{\mathbb{K}}(s,e) \mathcal{K}(s,e) \lambda (\dd e) = \lambda \widetilde{\mathbb{K}}(s) \mathcal{K}(s)$ and $\int_{E} \overline{\mathbb{K}}(s,e,e^\prime) \mathcal{K}(s,e^\prime) \lambda (\dd e^\prime) = \lambda \overline{\mathbb{K}}(s) \mathcal{K}(s)$. Moreover, Assumption \ref{Assumption_4} leads to $\int_{E} \widetilde{\mathbb{K}}(s,e) \mathcal{K}(s,e) \lambda (\dd e) = 0$ and $\int_{E} \overline{\mathbb{K}}(s,e,e^\prime) \mathcal{K}(s,e^\prime) \lambda (\dd e^\prime) = 0$. Hence, in both cases, we are able to find the explicit expressions of $\mathcal{Z}$ and $\mathcal{K}$, which are given in (\ref{eq_3_20}) and (\ref{eq_3_30}).
\end{enumerate}
\end{remark}

\section{Concluding Remarks} \label{Section_4}

{\color{black}We have considered the linear-quadratic (LQ) stochastic (leader-follower) Stackelberg differential game for jump-diffusion systems with random coefficients. Unlike \cite{Jiongmin_Yong_1}, while characterizing the state-feedback type optimal control of the leader, the technical restriction exists (see Remark \ref{Remark_7}). We have identified two different conditions to resolve this restriction. In both cases, the (two different) state-feedback types of the open-loop type Stackelberg equilibrium are characterized by establishing the stochastic maximum principle and Four-Step Schemes with jump diffusions, which are nontrivial extensions of the problem without jumps in \cite{Jiongmin_Yong_1}. We should mention that the LQ stochastic control problem of the leader is new and nontrivial, since  it is the indefinite LQ problem and its constraint is the coupled FBSDEs with jump diffusion and random coefficients induced by the rational behavior of the follower. We have obtained the stochastic maximum principle for the indefinite LQ problem of the leader via the variational approach.
}

We state some special cases of the results of this paper.
\begin{itemize}
\item When there are no jumps in (\ref{eq_1}), i.e., $F = G_1 = G_2 = 0$, Corollaries \ref{Corollary_1} and \ref{Corollary_2} become equivalent, which are reduced to the Stackelberg equilibrium without jumps in \cite{Jiongmin_Yong_1}. Specifically, for the case without jumps, Theorem \ref{Theorem_1} of \textbf{(LQ-F)} coincides with \cite[Theorem 2.3]{Jiongmin_Yong_1}. Moreover, in \textbf{(LQ-L)}, Theorems \ref{Theorem_2} and \ref{Theorem_3} become identical, which degenerate to \cite[Theorem 3.3]{Jiongmin_Yong_1}. 
\item When all the coefficients in (\ref{eq_1})-(\ref{eq_3}) are deterministic, we have $L=Z=\theta=\psi=0$ in (\ref{eq_2_11}) and (\ref{eq_2_12}), and $\Psi = \Theta = 0$ in (\ref{eq_3_21}) and (\ref{eq_3_31}). In this case, the ISRDEs for the leader and the follower become the deterministic integro-Riccati differential equations.
\end{itemize}

There are several interesting potential future research problems of this paper. {\color{black}One is the solvability of the ISRDEs of the leader in (\ref{eq_3_21}) and (\ref{eq_3_31}) for the existence of the Stackelberg equilibrium in view of Corollaries \ref{Corollary_1} and \ref{Corollary_2}.} Another potential problem is the mean-field type problem, in which case the expected values of $x$, $u_1$ and $u_2$, i.e., $\mathbb{E}[x(s)]$, $\mathbb{E}[u_1(s)]$ and $\mathbb{E}[u_2(s)]$, are included in (\ref{eq_1})-(\ref{eq_3}). This problem can be viewed as a generalization of \cite{Lin_TAC_2019} to jump-diffusion models. Finally, it is possible to study the Markov regime-switching jump-diffusion system, for which an additional Markov jump parameter is included in (\ref{eq_1})-(\ref{eq_3}). In this problem, we need to apply (and generalize) the stochastic maximum principle in \cite{Zhang_SICON_2012}.

\bibliographystyle{siamplain}
\bibliography{researches_1.bib}
\end{document}